%%%%%%%%%%%%%%%%%%%%%%%%%%%%%%%%%%%%%%%%%%%%%%%%%%%%%%%%%%%%%%%%%%%%%%%%%%
%  LaTeX input for the paper "DTC ultrafilters on groups"
% by Jan Pachl and Juris Stepr\={a}ns
%%%%%%%%%%%%%%%%%%%%%%%%%%%%%%%%%%%%%%%%%%%%%%%%%%%%%%%%%%%%%%%%%%%%%%%%%%

\documentclass[reqno]{article}		
\usepackage{amssymb,amsmath,amsthm,enumitem}
\usepackage{pifont}

\oddsidemargin 5mm
\evensidemargin 5mm
\textwidth 16cm
\textheight 23cm
\topmargin -2cm

\setlist{nolistsep}        % from package enumitem

\newtheorem{theorem}{Theorem}[section]
\newtheorem{lemma}[theorem]{Lemma}
\newtheorem{corollary}[theorem]{Corollary}
\newtheorem{proposition}[theorem]{Proposition}

\theoremstyle{definition}
\newtheorem{question}{Question}
\newtheorem{example}[theorem]{Example}

\newcommand{\iin}{\mathord\in}
\newcommand{\aproda}{\mathbin\text{\Pisymbol{pxsya}{"03}}}
\newcommand{\aprodb}{\mathbin\text{\Pisymbol{pxsyc}{"5E}}}

\newcommand{\card}[1]{\lvert#1\rvert}
\newcommand{\fset}{\mathcal{F}}
\newcommand{\Ftwo}{\mathbb{F}_2}
\newcommand{\Zint}{\mathbb{Z}}
\newcommand{\SYM}{\mathsf{Sym}}
\newcommand{\FINSYM}{\SYM_{<\aleph_0}(\omega)}
\newcommand{\FINC}{\mathsf{FC}}
\newcommand{\SUPP}{\mathsf{supp}}

%%%%%%%%%%%%%%%%%%%%%%%%%%%%%%%%%%%%%%%%%%%%%%%%%%%%%%%%%%%%%%%%%%%%%%%%%%%%%%%
%%%%%%%%%%%%%%%%%%%%%%%%%%%%%%%%%%%%%%%%%%%%%%%%%%%%%%%%%%%%%%%%%%%%%%%%%%%%%%%

\title{DTC ultrafilters on groups}

\author{Jan Pachl \and
Juris Stepr\={a}ns\thanks{Research supported by NSERC.}}

\date{}

\begin{document}

\maketitle

\begin{abstract}
We say that an ultrafilter on an infinite group $G$ is DTC
if it determines the topological centre of the semigroup $\beta G$.
We prove that DTC ultrafilters do not exist for virtually BFC groups,
and do exist for the countable groups that are not virtually FC.
In particular, an infinite finitely generated group is virtually abelian
if and only if it does not admit a DTC ultrafilter.
\end{abstract}

%%%%%%%%%%%%%%%%%%%%%%%%%%%%%%%%%%%%%%%%%%%%%%%%%%%%%%%%%%%%%%%%%%%%%%%%%%%%%%%

\section{Introduction}

When $G$ is an infinite group,
the binary group operation on $G$ extends to the \v{C}ech--Stone compactification $\beta G$
in two natural ways.
They are defined in section~\ref{sec:prelim} and,
as in~\cite[Ch.6]{Dales2010bas}, denoted by $\aproda$ and $\aprodb$.
Say that $v\iin\beta G$ is a \emph{DTC ultrafilter for $\beta G$} if
$u\aproda v \neq u\aprodb v$ for every $u\iin\beta G \setminus G$.
DTC stands for \emph{determining the (left) topological centre}.

Dales et al~\cite{Dales2010bas} investigate the DTC notion in the context of their study
of Banach algebras on semigroups and their second duals.
% Dales et al prove that the free group $\Ftwo$ admits
They prove that the free group $\Ftwo$ admits
a DTC ultrafilter~\cite[12.22]{Dales2010bas},
and that no abelian group does.
Here we address the problem of characterizing those groups that admit
DTC ultrafilters, the class of groups we call DTC(1).
It is not obvious how good such a characterization can be,
even for countable groups,
as it is not clear from the definition whether
the class of countable DTC(1) groups, suitably encoded,
is even within the projective hierarchy.
However, it follows from our results that the class of finitely generated DTC(1) groups
is a Borel set.
In fact, we prove that an infinite finitely generated group is virtually abelian
if and only if it does not belong to DTC(1).
This provides a partial answer to question~(13)
in~\cite[Ch.13]{Dales2010bas}.

The algebraic property used above to define a DTC ultrafilter
is equivalent to a topological one:
$v\iin\beta G$ is a DTC ultrafilter for $\beta G$ if and only if
for every $u\iin\beta G \setminus G$
the mapping $w\mapsto u\aproda w$ from $G\cup\{v\}$ to $\beta G$
is discontinuous at $v$.
In this paper we do not deal with
another notion of sets determining the topological centre,
in which the mapping $w\mapsto u\aproda w$ from the whole $\beta G$ to $\beta G$
is required to be discontinuous at $v$.
Every infinite group admits a DTC ultrafilter in that sense.
Budak et al~\cite{Budak2011mdt} discuss and compare the two notions.

%%%%%%%%%%%%%%%%%%%%%%%%%%%%%%%%%%%%%%%%%%%%%%%%%%%%%%%%%%%%%%%%%%%%%%%%%%%

\section{Preliminaries}
    \label{sec:prelim}

When $X$ is a set, $\beta X$ is the \v{C}ech--Stone compactification of $X$,
the set of ultrafilters on $X$ with the usual compact topology~\cite[\S3.2]{Hindman2012asc}.
We identify each element of $X$ with the corresponding principal ultrafilter,
so that $X\subseteq\beta X$.
When $Y\subseteq X$, identify each ultrafilter on $Y$ with its image on $X$,
so that $\beta Y \subseteq \beta X$.

If $\{x_\xi\}_{\xi\in I}$ is a net of elements of $X$
indexed by a directed partially ordered set $I$,
then $\fset := \{ \{ x_\xi \mid \xi \geq \eta \} \mid \eta \iin I \}$
is a filter of subsets of $X$.
An ultrafilter $u\iin\beta X$ is a cluster point of the net $\{x_\xi\}_{\xi}$
in $\beta X$ if and only if $\fset \subseteq u$.

In accordance with the standard set theory notation,
each ordinal is the set of all smaller ordinals,
and the least infinite ordinal is $\omega=\{0,1,2,\dots\}$.
The cardinality of a set $X$ is $\card{X}$.
The set of all subsets of $X$ of cardinality $\kappa$ is $[X]^\kappa$,
and the set of all subsets of cardinality less than $\kappa$ is
$[X]^{<\kappa}$.

When $G$ is a group, denote by $e_G$ its identity element.
When $G$ is an infinite group and $u,v\iin\beta G$,
define~\cite[Ch.6]{Dales2010bas}
\begin{align*}
u \aproda v & := \{ A \subseteq G \mid \{ x\iin G \mid x^{-1} A \iin v \} \iin u \} \\
u \aprodb v & := \{ A \subseteq G \mid \{ x\iin G \mid A x^{-1} \iin u \} \iin v \}
\end{align*}
The operations $\aproda$ and $\aprodb$ are associative, and
$u \aproda v, u \aprodb v \iin \beta G$ for $u,v\iin\beta G$.
Thus $(\beta G,\aproda)$ and $(\beta G,\aprodb)$ are semigroups.
When $u\iin\beta G$, define $u^{\aproda 1} := u$ and
$u^{\aproda (n+1)} := u \aproda u^{\aproda n}$
for $n\iin\omega$, $n>0$.

Say that $D\subseteq\beta G$ is a \emph{(left) DTC set
for $\beta G$ (in the sense of Dales--Lau--Strauss~\cite{Dales2010bas})} if
\[
\forall u\iin\beta G \setminus G \quad
\exists v\iin D \quad u\aproda v \neq u\aprodb v .
\]
Thus $v\iin\beta G$ is a DTC ultrafilter for $\beta G$
(defined in the introduction) if and only if
the singleton $\{v\}$ is a DTC set.

The next lemma gathers the elementary properties of $\aproda$ and $\aprodb$ needed
in the sequel.

\begin{lemma}
    \label{lem:prelim}
The following hold for any infinite group $G$ and $x,y \iin G$, $u,v\iin \beta G$:
\begin{enumerate}[label=({\roman*})]
\item\label{lem:prelim:i}
$x\aproda y = x \aprodb y = xy$.
\item
$x\aproda u = x\aprodb u$.
\item\label{lem:prelim:iii}
$u\aproda x = u\aprodb x$.
\item\label{lem:prelim:iv}
If $U\iin u$ and $V\iin v$ then $UV\iin u\aproda v$ and $UV\iin u\aprodb v$.
\item
If $H$ is a subgroup of $G$ then
$(\beta H,\aproda)$ is a subsemigroup of $(\beta G,\aproda)$ and
$(\beta H,\aprodb)$ is a subsemigroup of $(\beta G,\aprodb)$.
\item\label{lem:prelim:viii}
If $G$ is abelian then $u\aproda v = v \aprodb u$,
and in particular $u\aproda u = u \aprodb u$.
\item\label{lem:prelim:ix}
If $v$ is a DTC ultrafilter then $v\iin \beta G \setminus G$.
\item\label{lem:prelim:x}
If $u,v\iin\beta G \setminus G$ then $u \aproda v,u \aprodb v\iin\beta G \setminus G$.
\end{enumerate}
\end{lemma}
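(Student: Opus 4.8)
The plan is to verify every clause by unwinding the definitions of $\aproda$ and $\aprodb$, using three elementary bookkeeping facts: for a principal ultrafilter $x\iin G$ and $A\subseteq G$ one has $A\iin x$ exactly when $x\iin A$; the translates satisfy $w^{-1}A=\{w^{-1}a\mid a\iin A\}$ and $Aw^{-1}=\{aw^{-1}\mid a\iin A\}$; and hence $\{w\iin G\mid xw\iin A\}=x^{-1}A$ while $\{w\iin G\mid wx\iin A\}=Ax^{-1}$. With these in hand, clauses (i)--(iii) follow by substituting principal ultrafilters into the defining formulas and collapsing the resulting nested conditions: for instance $A\iin x\aproda y$ iff $x^{-1}A\iin y$ iff $xy\iin A$, and the other cases are analogous. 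Taken together, these three clauses also record that $\aproda$ and $\aprodb$ agree as soon as one of the two arguments lies in $G$.

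For clause (iv) I would argue that, given $U\iin u$ and $V\iin v$, every $w\iin U$ satisfies $V=w^{-1}(wV)\subseteq w^{-1}(UV)$, so $w^{-1}(UV)\iin v$; thus $U\subseteq\{w\iin G\mid w^{-1}(UV)\iin v\}$, which therefore lies in $u$, so $UV\iin u\aproda v$. The claim for $\aprodb$ is the mirror computation, using $U=(Uw)w^{-1}\subseteq(UV)w^{-1}$ for $w\iin V$. Clause (vi) is immediate once one observes that in an abelian group $w^{-1}A=Aw^{-1}$, so the set defining membership of $A$ in $u\aproda v$ is verbatim the one defining membership of $A$ in $v\aprodb u$; putting $v=u$ gives the final assertion.

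Clauses (vii) and (viii) are short arguments by contradiction. For (vii): if $v=x\iin G$, then clause (iii) gives $u\aproda v=u\aprodb v$ for every $u\iin\beta G$, and since $G$ is infinite $\beta G\setminus G$ is nonempty, so $v$ violates the defining property of a DTC ultrafilter. For (viii): suppose $u\aproda v=z\iin G$; then $\{z\}\iin u\aproda v$ means $\{w\iin G\mid\{w^{-1}z\}\iin v\}\iin u$, but $\{w^{-1}z\}\iin v$ would force $v$ to be principal, contrary to $v\iin\beta G\setminus G$, so this set is empty and cannot belong to $u$. The statement for $\aprodb$ is the same argument: $\{z\}\iin u\aprodb v$ forces $\{w\iin G\mid\{zw^{-1}\}\iin u\}\iin v$, and $\{zw^{-1}\}\iin u$ would make $u$ principal.

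The only clause needing real care, and the step I expect to be \emph{the main obstacle}, is clause (v). Here one keeps in mind the identifications: an ultrafilter $p$ on $H$ corresponds to $\bar p=\{A\subseteq G\mid A\cap H\iin p\}$ on $G$, and $\beta H=\{q\iin\beta G\mid H\iin q\}$. Given $u,v\iin\beta H$, applying clause (iv) to $H\iin u$ and $H\iin v$ yields, since $H$ is a subgroup, $HH=H\iin u\aproda v$, so $u\aproda v\iin\beta H$; it then remains to show that the $\beta G$-product restricted to subsets $A\subseteq H$ coincides with the $\beta H$-product. This reduces to the identity $\{x\iin G\mid x^{-1}A\iin v\}\cap H=\{x\iin H\mid x^{-1}A\cap H\iin v\}$ for $A\subseteq H$, which holds because $x\iin H$ and $A\subseteq H$ force $x^{-1}A\subseteq H$, combined with $H\iin u$ to pass between membership in $u$ and membership of the restricted set; the computation for $\aprodb$ is symmetric.
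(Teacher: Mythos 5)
Your proof is correct, and for most of the lemma it is exactly what the paper intends: the paper's proof simply states that clauses (i)--(vi) ``follow directly from the definition of $\aproda$ and $\aprodb$,'' and your computations are the routine unwinding that remark refers to; likewise your derivation of (vii) from (iii) together with the infinitude of $G$ is precisely the paper's argument. The one place you genuinely diverge is clause (viii): the paper does not prove it but cites Corollary~4.29 of Hindman--Strauss, whereas you give a self-contained argument --- if $u\aproda v$ were principal at $z$ it would contain $\{z\}$, and $\{w^{-1}z\}\iin v$ (resp.\ $\{zw^{-1}\}\iin u$ for $\aprodb$) would force $v$ (resp.\ $u$) to be principal, so the defining set is empty and cannot lie in an ultrafilter. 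This replacement is valid and elementary, and it in fact yields slightly more than the stated clause, namely that $u\aproda v$ is nonprincipal as soon as $v$ is, and $u\aprodb v$ is nonprincipal as soon as $u$ is; the citation buys nothing here beyond brevity. Your treatment of clause (v), including the verification that the $\beta G$-operation restricted to subsets of $H$ agrees with the intrinsic $\beta H$-operation under the identification $\beta H=\{q\iin\beta G\mid H\iin q\}$, is more careful than what the paper records and is correct.
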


\begin{proof}
Parts~\ref{lem:prelim:i} to \ref{lem:prelim:viii} follow
directly from the definition of $\aproda$ and $\aprodb$,
and \ref{lem:prelim:ix} follows from \ref{lem:prelim:iii}.
Part~\ref{lem:prelim:x} is a special case of Corollary~4.29 in~\cite{Hindman2012asc}.
\end{proof}

Say that a group is DTC(0) if it is finite.
When $G$ is an infinite group, say $G$ is DTC($\kappa$) if
$\kappa$ is the least cardinality of a DTC set for $\beta G$.
By the next theorem every infinite group is either DTC(1) or DTC(2).

\begin{theorem}
    \label{th:twopt}
Let $G$ be an infinite group.
Then there is a two-point DTC subset of $\beta G$.
\end{theorem}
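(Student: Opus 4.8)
The plan is to restate the theorem set-theoretically and then build the two ultrafilters from a carefully chosen injective sequence in $G$. For $v\in\beta G$ put $C(v):=\{u\in\beta G\mid u\aproda v=u\aprodb v\}$; by Lemma~\ref{lem:prelim}\ref{lem:prelim:iii}, $G\subseteq C(v)$, and $v$ is a DTC ultrafilter exactly when $C(v)=G$ (if $v\in G$ then $C(v)=\beta G$, which also recovers Lemma~\ref{lem:prelim}\ref{lem:prelim:ix}). A pair $\{v_1,v_2\}$ is a DTC set precisely when $C(v_1)\cap C(v_2)=G$, so the task is to find two ultrafilters whose commutation sets $C(v_i)$ meet only in $G$. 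It is known that $\bigcap_{v\in\beta G}C(v)=G$ (the topological centre of the right topological semigroup $(\beta G,\aproda)$ is $G$); the content of the theorem is that two values of $v$ already suffice.

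The main tool is an explicit description of the two products. For fixed $u$ the set $\{w\mid A\in u\aprodb w\}$ equals the clopen set $\overline{\{y\mid Ay^{-1}\in u\}}$, so $w\mapsto u\aprodb w$ is continuous and $u\aprodb v=\lim_{y\to v}(u\aprodb y)$, the limit over the net of $y\in G$ converging to $v$; likewise $w\mapsto w\aproda v$ is continuous, so $u\aproda v=\lim_{x\to u}(x\aproda v)$, and on $G$ both operations are the group product by Lemma~\ref{lem:prelim}\ref{lem:prelim:i}--\ref{lem:prelim:iii}. Fix an injective sequence $(g_n)_{n\in\omega}$ in $G$ and a nonprincipal $p\in\beta\omega$, and let $v:=\lim_{n\to p}g_n$. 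Unwinding the definitions, for $A\subseteq G$,
\[
A\in u\aproda v \iff \{x\in G\mid\{n\mid xg_n\in A\}\in p\}\in u,
\qquad
A\in u\aprodb v \iff \{n\mid Ag_n^{-1}\in u\}\in p ,
\]
so $u\in C(v)$ becomes a Fubini-type interchange condition between $u$ and $p$ relative to the surjection $(x,n)\mapsto xg_n$. (It is also useful that $C(v)=C(v\aprodb x)$ for $x\in G$, since $w\aproda(v\aprodb x)=(w\aproda v)\aprodb x$ — both sides are continuous in $w$ and agree on $G$ — so only the right $G$-orbit of $v$ matters.)

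With these reductions, the construction runs as follows. First choose $(g_n)$ sparse enough that for every $x\ne e_G$ only finitely many $n$ satisfy $xg_n\in\{g_m\mid m\in\omega\}$; such a sequence exists by a routine recursion in the infinite group $G$. For this sequence and any $S\subseteq\omega$, the column test set $A=\{g_n\mid n\in S\}$ satisfies $\{x\mid\{n\mid xg_n\in A\}\in p\}\subseteq\{e_G\}$, hence $A\notin u\aproda v$ for every non-principal $u$; together with $u\in C(v)$ this already forces $\{n\mid\{g_mg_n^{-1}\mid m\in S\}\in u\}\notin p$. The remaining, decisive task is to extract from $u\in C(v_1)$ a sufficiently rigid description of $C(v_1)\setminus G$ and then to pick $v_2$ — say from $(g_n^{-1})$, or from two disjoint subsequences of $(g_n)$, or from $(g_n)$ with a second, sufficiently independent nonprincipal ultrafilter — so that $C(v_2)$ misses all of $C(v_1)\setminus G$. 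I expect this to be the main obstacle: the column sets alone cannot achieve it, because every cofinite subset of $\omega$ lies in every non-principal ultrafilter on $\omega$, so for each individual $u\in C(v_1)\setminus G$ one must produce a test set $A_u$ lying in exactly one of $u\aproda v_2$ and $u\aprodb v_2$ — witnessing $u\notin C(v_2)$ — and this must be arranged uniformly, using the structural constraints on $u$ inherited from $u\in C(v_1)$.
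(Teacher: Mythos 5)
Your reduction to finding $v_0,v_1$ with $C(v_0)\cap C(v_1)=G$ is correct, and your formulas for $u\aproda v$ and $u\aprodb v$ when $v$ is a $p$-limit of an injective sequence are accurate, as is the observation that sparse column sets fail to belong to $u\aproda v$ for non-principal $u$. But the proof has a genuine gap, and you acknowledge it yourself: the ``remaining, decisive task'' of choosing the second ultrafilter so that $C(v_1)\setminus G$ is avoided is precisely the content of the theorem, and no mechanism for it is given. Your plan — extract a ``rigid description'' of $C(v_1)\setminus G$ from the commutation constraints and then pick $v_2$ to miss it — is unlikely to succeed as stated, because $C(v_1)\setminus G$ has no usable classification; the paper never analyzes it at all. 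The device that actually closes the argument is different in kind: one constructs the two ultrafilters \emph{simultaneously}, as cluster points of nets $\{z_{iK}\}_{K\in[G]^{<\aleph_0}}$ ($i=0,1$) chosen by a separation recursion (Lemma~\ref{lem:separation}) so that the two ``difference sets'' $S_i=\bigcup\{Kz_{iK}z_{iL}^{-1}\mid K\neq L\}$ are \emph{disjoint}. Disjointness is what handles an arbitrary $u\iin\beta G\setminus G$ with only two candidates: $u$ must omit some $S_i$, and then the inclusion $W_iz_{iK}^{-1}\subseteq K\cup S_i$ (finite set union $S_i$) shows $W_i\notin u\aprodb v_i$, while $x^{-1}W_i\supseteq\{z_{iK}\mid K\ni x\}$ gives $W_i\iin u\aproda v_i$ for every $u$. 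Nothing in your column-set computation produces such a dichotomy over all non-principal $u$; for each fixed $u$ you can find a test set, but you have no uniform way to make one of two fixed ultrafilters work for all $u$ at once.

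A secondary issue: your construction is tied to a single $\omega$-indexed sequence $(g_n)$ and an ultrafilter $p\iin\beta\omega$, which at best addresses countable $G$; the theorem is for arbitrary infinite groups, and the paper's proof accordingly uses nets indexed by $([G]^{<\aleph_0},\subseteq)$ and a transfinite recursion of length $\card{G}$ in Lemma~\ref{lem:separation}. If you repair the main gap along the lines above (two families of translates with disjoint difference sets), the same repair also removes the countability restriction.
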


This is a special case of Theorem~12.15 in~\cite{Dales2010bas}
and of Theorem~1.2 in~\cite{Budak2011mdt}.
Here we give a direct proof using the following lemma.

\begin{lemma}
    \label{lem:separation}
Let $G$ be an infinite group, and let $A$ be an index set with $\card{A}=\card{G}$.
For each $\alpha\iin A$ let $F_\alpha$ be a finite subset of $G$.
Then there are $z_\alpha \iin G$ for $\alpha\iin A$ such that
\begin{equation}
\label{eq:sep}
%\tag{*}
F_\alpha z_\alpha z_\gamma^{-1} \cap F_\beta z_\beta z_\delta^{-1} = \emptyset
\quad\text{for}\quad \{\alpha,\beta,\gamma,\delta\}\iin [A]^4.
\end{equation}
\end{lemma}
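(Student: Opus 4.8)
The plan is to build the elements $z_\alpha$ one at a time by transfinite recursion along an enumeration of $A$ in order type $\card{A}$, at each stage choosing $z_\alpha$ to avoid a bounded number of "forbidden" cosets determined by the previously chosen elements. The condition \eqref{eq:sep} is a statement about 4-element subsets $\{\alpha,\beta,\gamma,\delta\}$, so I need to be a little careful about which indices are allowed to coincide with the "current" one; let me think about how the recursion interacts with the quantifier over $[A]^4$.

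First I would rewrite the target condition. Fix $\{\alpha,\beta,\gamma,\delta\}\in[A]^4$; the displayed intersection is nonempty iff there exist $f\in F_\alpha$, $g\in F_\beta$ with $fz_\alpha z_\gamma^{-1}=gz_\beta z_\delta^{-1}$, i.e. $g^{-1}f = z_\beta z_\delta^{-1}z_\gamma z_\alpha^{-1}$. So I want to arrange that for all quadruples of distinct indices, the set $F_\beta^{-1}F_\alpha$ (a finite set) misses the element $z_\beta z_\delta^{-1} z_\gamma z_\alpha^{-1}$. Enumerate $A=\{\alpha_\xi : \xi<\card{G}\}$ and suppose $z_{\alpha_\eta}$ has been chosen for all $\eta<\xi$; I must choose $z:=z_{\alpha_\xi}$. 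The quadruples whose largest index is $\alpha_\xi$ come in two shapes depending on whether $\alpha_\xi$ plays the role of $\alpha$ (so $z$ appears as $z_\alpha^{-1}$, i.e. as $z^{-1}$ on the right) or the role of $\beta,\gamma,\delta$. In each case the equation $g^{-1}f = z_\beta z_\delta^{-1}z_\gamma z_\alpha^{-1}$, with three of the four $z$'s already fixed, specifies $z$ uniquely as a function of the choice of $f\in F_\alpha$, $g\in F_\beta$, and the triple of earlier indices. Crucially, the right-hand side is linear in $z$: solving for $z$ gives $z = w_1 (g^{-1}f) w_2$ or $z = w_1 (f^{-1}g) w_2$ for group elements $w_1,w_2$ built from the already-chosen $z$'s. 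Hence the set of "bad" values of $z$ at stage $\xi$ is a union of at most (number of triples from $\{\alpha_\eta:\eta<\xi\}$) $\times$ $|F_\alpha|\cdot|F_\beta|$ many singletons.

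The point is a counting/cardinality argument: at stage $\xi$ there are $<\card{G}$ earlier indices, hence $\le\card{\xi}^3<\card{G}$ (when $\card{G}$ is infinite this is just $\le\max(\card{\xi},\aleph_0)<\card{G}$) relevant triples, and for each the $F$'s are finite, so the total number of forbidden values of $z$ is strictly less than $\card{G}=\card{\xi}$ when $\card{\xi}$ is infinite — wait, I need $|A|$ itself to be regular or just to note $\card{\xi}^{<\aleph_0}=\card{\xi}<\card{G}$ for each $\xi<\card G$, which holds since $\card\xi<\card G$ and $\card G$ infinite gives $\card\xi\cdot\aleph_0<\card G$; and for $\xi$ finite the forbidden set is finite while $G$ is infinite. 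Either way, there are fewer than $\card{G}$ forbidden values, so a legal $z_{\alpha_\xi}\in G$ exists, and the recursion goes through.

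The main obstacle is organizing the bookkeeping so that every quadruple in $[A]^4$ — not just those whose top index is being processed — is actually killed, and that the roles of $\alpha,\beta,\gamma,\delta$ are handled symmetrically. I would handle this by: at stage $\xi$, for \emph{every} assignment of the label $\alpha_\xi$ to one of the four slots, with the other three slots filled by \emph{any} three distinct earlier indices (in any order), forbidding the finitely many $z$-values that would create a collision. Since each quadruple $\{\alpha,\beta,\gamma,\delta\}$ has a well-defined largest index under the enumeration, it is dealt with precisely at that stage, for every internal ordering of the other three. Checking that "solve the linear equation for $z$" always yields a \emph{single} group element (so the forbidden set really is small) is routine once one observes $z\mapsto z_\beta z_\delta^{-1}z_\gamma z_\alpha^{-1}$ is, with three of the four fixed, a bijection of $G$ onto $G$ given by left- and right-translation composed with at most one inversion. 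I would spell out the two representative cases ($\alpha_\xi$ in the $\alpha$-slot versus in the $\beta$-slot) and note the remaining cases are symmetric.
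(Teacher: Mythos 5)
Your proposal is correct and follows essentially the same route as the paper's proof: a transfinite recursion of length $\card{G}$ in which, at each stage, the new $z$ must avoid a set of forbidden values obtained by solving the collision equation for $z$ over all triples of earlier indices and all choices from the finite sets, which has cardinality strictly less than $\card{G}$. The only cosmetic difference is that the paper uses the symmetry of the condition to treat just two roles for the newest index (yielding the two explicit families of excluded sets), whereas you treat all four roles separately; both are fine.
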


\begin{proof}
Without loss of generality, assume $A$ is the cardinal $\kappa=\card{G}$.
Define $z_0 = z_1 = z_2 = e_G$.
Then proceed by transfinite recursion:
For $\beta\iin\kappa\setminus \{0,1,2\}$,
when $z_\alpha$ have been defined for all $\alpha\iin\beta$,
take any
\[
z_\beta \iin G \setminus
         \bigcup_{\{\alpha,\gamma,\delta\}\in[\beta]^3}
         \left(
             F_\beta^{-1} F_\alpha z_\alpha z_\gamma^{-1} z_\delta
                \cup z_\gamma z_\alpha^{-1} F_\alpha^{-1} F_\delta z_\delta
        \right)\;.
\qedhere
\]
\end{proof}

\begin{proof}[Proof of Theorem~\ref{th:twopt}]
Put $A:=\{0,1\}\times[G]^{<\aleph_0}$ and $F_{iK}:=K$
for every $(i,K)\iin A$.
By Lemma~\ref{lem:separation} there are $z_{iK}$ for $(i,K)\iin A$
such that (\ref{eq:sep}).
For $i=0,1$ the elements $z_{iK}$ form a net indexed by the directed poset
$([G]^{<\aleph_0},\subseteq)$;
let $v_i \iin \beta G$ be a cluster point of the net $\{ z_{iK} \}_K$.
We shall prove that $\{v_0 , v_1 \}$ is a DTC set.

For $i=0,1$ define
\begin{align*}
W_i := & \bigcup \{K z_{iK} \mid K \iin [G]^{<\aleph_0} \} \\
S_i := & \bigcup \{K z_{iK} z_{iL}^{-1} \mid K,L \iin [G]^{<\aleph_0}, K \neq L \}
\end{align*}

Since $x^{-1} W_i \iin v_i$ for every $x\iin G$,
it follows that $W_i\iin u \aproda v_i$ for every $u\iin\beta G$.

Now take any $u\iin \beta G \setminus G$.
From (\ref{eq:sep}) we have $S_0 \cap S_1 = \emptyset$,
so there is $i\iin \{0,1\}$ such that $S_i \not\in u$.
For every $K \iin [G]^{<\aleph_0}$ we have
$W_i z_{iK}^{-1} \subseteq K \cup S_i $, hence $W_i z_{iK}^{-1} \not\in u$.
But $\{ z_{iK} \mid K \iin [G]^{<\aleph_0} \} \iin v_i$,
and from the definition of $\aprodb$ we get $W_i \not\in u\aprodb v_i$.
We have proved that $u \aproda v_i \neq u\aprodb v_i$.
\end{proof}

As will be seen in sections~\ref{sec:suff} and~\ref{sec:countable},
properties DTC($\kappa$), $\kappa=1,2$, are connected to the structure of conjugacy classes.
For any group $G$  and $y,z\iin G$ define
\begin{align*}
G_{yz} &:= \{ x \iin G \mid x^{-1}yx=z \} \\
[y]_G  &:= \{ x^{-1}yx \mid x\iin G\}       \\
\FINC(G) &:= \{ y\iin G \mid [y]_G \text{ is finite} \}
\end{align*}
Each $G_{yy}$ is a subgroup of $G$.
Clearly $G_{yz}\neq\emptyset$ if and only if $[y]_G=[z]_G$,
and in that case $G_{yz}$ is a right coset of $G_{yy}$:
If $x^{-1}yx=z$ then $G_{yz}=G_{yy} x$.
For a fixed $y\iin G$,
$\varphi(x):= x^{-1}yx$ defines a mapping $\varphi$ from $G$ onto $[y]_G$
such that $\varphi^{-1}(z)=G_{yz}$ for each $z\iin[y]_G$.
Hence the cardinality of $[y]_G$ equals the index of $G_{yy}$ in $G$.

$\FINC(G)$ is a normal subgroup of $G$.
Say that $G$ is an \emph{ICC group} if $G$ is infinite and $\FINC(G)=\{e_G \}$.
Say that $G$ is an \emph{FC group} if $\FINC(G)=G$.
Say that $G$ is a \emph{BFC group} if $\sup_{y\in G} \card{[y]_G} < \infty$.

When $P$ is a property of groups, a group is said to be \emph{virtually $P$} if
it has a subgroup of finite index that has property $P$.

In section~\ref{sec:countable} we need the following known results,
which are respectively Lemma~4.17, Theorem~1.41 and a corollary of Theorem~4.32
in~\cite{Robinson1972fcg}.

\begin{samepage}
\begin{lemma}
    \label{lem:threelem}
\begin{enumerate}[label=({\roman*})]
\item\label{lem:neumann}
(B.H.~Neumann's theorem)
Let $G$ be a group such that $G=\bigcup_{i=0}^n x_i G_i$ where $x_i \iin G$
and $G_i$ is a subgroup of $G$ for $i=0,1,\dots,n$.
Then at least one of the groups $G_i$ has finite index in~$G$.
\item\label{lem:schreier}
(Schreier's subgroup lemma)
Every subgroup of finite index in a finitely generated group is finitely generated.
\item\label{lem:fingenvab}
Every finitely generated FC group is virtually abelian.
\end{enumerate}
\end{lemma}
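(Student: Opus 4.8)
The three parts are independent classical facts, and the plan is to treat each separately; none of them uses the semigroup machinery of the previous sections. For~\ref{lem:neumann} I would reproduce B.H.~Neumann's argument, organised as an induction on the number of \emph{distinct} subgroups occurring among $G_0,\dots,G_n$ rather than on the number of cosets. For~\ref{lem:schreier} I would use the Reidemeister--Schreier rewriting: a finite generating set of $G$ together with a Schreier transversal of $H$ yields an explicit finite generating set of $H$. For~\ref{lem:fingenvab}, which is immediate given the coset bookkeeping already recorded above, I would show that the centre $Z(G)$ itself has finite index in $G$.

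For~\ref{lem:neumann}: if only one subgroup occurs, say $G_i=H$ for all $i$, then $G$ is a union of $n+1$ left cosets of $H$, so $[G:H]\leq n+1<\infty$. For the inductive step, assume no $G_i$ has finite index and fix a subgroup $H$ occurring in the cover. Only finitely many of the cosets $x_iG_i$ are cosets of $H$, so, as $[G:H]$ is infinite, some left coset $gH$ is disjoint from all of them, whence $gH\subseteq\bigcup\{x_iG_i\mid G_i\neq H\}$. Left-translating this inclusion by $x g^{-1}$ shows that every coset $xH$ appearing in the cover is contained in a union of cosets of the subgroups different from $H$; replacing each such $xH$ accordingly produces a new finite cover of $G$ by cosets that involves strictly fewer distinct subgroups. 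By the induction hypothesis one of those subgroups has finite index, contradicting the assumption. The only delicate point is precisely this replacement step: it may \emph{increase} the total number of cosets, which is exactly why the induction must be run on the number of distinct subgroups and not on $n$.

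For~\ref{lem:schreier}: fix a finite generating set $S$ of $G$ and choose a right transversal $T$ of $H$ in $G$ that is Schreier (closed under taking prefixes of reduced words; one may simply take representatives of minimal word length, and $\card{T}=[G:H]<\infty$). Writing $\overline{w}\iin T$ for the representative of the coset $Hw$, one verifies that $H$ is generated by $\{\,ts\,\overline{ts}^{\,-1}\mid t\iin T,\ s\iin S\,\}$, which is finite because $T$ and $S$ are. The two routine ingredients here are the existence of the Schreier transversal and the check that these elements generate $H$.

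For~\ref{lem:fingenvab}: write $G=\langle g_1,\dots,g_k\rangle$ with each $g_i\iin\FINC(G)$. Since an element of $G$ that commutes with every $g_i$ commutes with all of $G$, we have $Z(G)=\bigcap_{i=1}^k G_{g_ig_i}$, where $G_{g_ig_i}=\{x\iin G\mid xg_i=g_ix\}$ is the centraliser of $g_i$; by the discussion preceding the lemma this subgroup has index $\card{[g_i]_G}<\infty$ in $G$. A finite intersection of finite-index subgroups again has finite index, so $Z(G)$ is an abelian subgroup of finite index and $G$ is virtually abelian. Overall I expect~\ref{lem:neumann} to be the only part carrying any real content, namely the combinatorial bookkeeping of its inductive step; \ref{lem:schreier} is standard combinatorial group theory, and \ref{lem:fingenvab} follows from the coset computations already in hand.
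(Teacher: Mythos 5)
Your three arguments are correct, but note that the paper does not prove this lemma at all: it is stated as a collection of known results and discharged by citation to Robinson's monograph (respectively Lemma~4.17, Theorem~1.41, and a corollary of Theorem~4.32 there). So your proposal differs from the paper simply by being self-contained, and the proofs you give are the classical ones: Neumann's induction on the number of \emph{distinct} subgroups in the coset cover (your remark that the replacement step can increase the number of cosets, so the induction cannot run on $n$, is exactly the right delicate point); Schreier generators $t s\,\overline{ts}^{\,-1}$ for part (ii); and for part (iii) the observation that $Z(G)=\bigcap_{i}G_{g_ig_i}$ is a finite intersection of subgroups of finite index (using, as you say, the fact recorded before the lemma that $[G:G_{yy}]=\card{[y]_G}$, and that $g_i\iin\FINC(G)=G$ since $G$ is FC). Two minor remarks: in (ii) the Schreier (prefix-closed) property of the transversal is not needed for mere finite generation --- any right transversal containing $e_G$ works, and the prefix-closure would only matter for the sharper Nielsen--Schreier-type conclusions; and the verification that the elements $t s\,\overline{ts}^{\,-1}$ generate $H$, which you leave as ``routine,'' is the only place where a detail is actually suppressed (one rewrites $h=s_1\cdots s_m$ via the running representatives $t_0=e_G$, $t_j=\overline{t_{j-1}s_j}$ and uses $t_m=e_G$ for $h\iin H$). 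The trade-off is the obvious one: the paper's citation keeps the exposition short, while your route makes the lemma independent of outside references at the cost of about a page of standard group theory.
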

\end{samepage}

%%%%%%%%%%%%%%%%%%%%%%%%%%%%%%%%%%%%%%%%%%%%%%%%%%%%%%%%%%%%%%%%%%%%%%%%%%%%

\section{Sufficient condition}
    \label{sec:suff}

In this section we prove a sufficient condition for a group to be DTC(2).
We start by showing that the DTC(1) and DTC(2) properties are inherited by subgroups
of finite index.

\begin{theorem}
    \label{th:finindex}
Let $G$ be a group and $H$ its subgroup of finite index.
Then $G$ is DTC(1) if and only if $H$ is.
\end{theorem}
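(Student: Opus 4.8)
The plan is to transport a DTC ultrafilter between $G$ and $H$ almost verbatim, correcting only by a one-sided translation. If $G$ is finite then so is $H$ and neither group is DTC(1), so the equivalence is trivial in that case; hence assume $G$, and then also $H$ (since $[G:H]<\infty$), is infinite. Fix a left transversal $t_1,\dots,t_n$ and a right transversal $s_1,\dots,s_m$ of $H$ in $G$, so that $G=\bigcup_{i\le n}t_iH=\bigcup_{j\le m}Hs_j$ are partitions into finitely many cosets. Two elementary facts drive everything. \textbf{(a)} If $u\iin\beta G$ and $t_iH\iin u$, then $u':=t_i^{-1}\aproda u$ lies in $\beta H$, satisfies $u=t_i\aproda u'$, and is in $H$ precisely when $u\iin G$; dually, if $v\iin\beta G$ and $Hs_j\iin v$, then $v':=v\aproda s_j^{-1}$ lies in $\beta H$, satisfies $v=v'\aproda s_j$, and is in $H$ precisely when $v\iin G$. \textbf{(b)} For each $x\iin G$, left translation $w\mapsto x\aproda w$ and right translation $w\mapsto w\aproda x$ are bijections of $\beta G$ (with inverses the translations by $x^{-1}$), by associativity together with Lemma~\ref{lem:prelim}\ref{lem:prelim:i}. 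I shall also use freely from Lemma~\ref{lem:prelim} that $x\aproda w=x\aprodb w$ and $w\aproda x=w\aprodb x$ for $x\iin G$, and that a product (under $\aproda$ or $\aprodb$) of two elements of $\beta H$ is the same whether taken in $\beta H$ or in $\beta G$.

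Suppose $H$ is DTC(1), with DTC ultrafilter $v$ (so $v\iin\beta H\setminus H$, by Lemma~\ref{lem:prelim}\ref{lem:prelim:ix}); I claim $v$, viewed in $\beta G$, is a DTC ultrafilter for $\beta G$. Take $u\iin\beta G\setminus G$, pick $i$ with $t_iH\iin u$, and write $u=t_i\aproda u'$ with $u'\iin\beta H\setminus H$ by~(a). Associativity of each of $\aproda,\aprodb$, together with $t_i\aproda u'=t_i\aprodb u'$, yields
\[
u\aproda v=t_i\aproda(u'\aproda v),\qquad u\aprodb v=t_i\aproda(u'\aprodb v).
\]
Since left translation by $t_i$ is injective on $\beta G$, it is enough that $u'\aproda v\neq u'\aprodb v$, which holds because $u'\iin\beta H\setminus H$ and $v$ is a DTC ultrafilter for $\beta H$.

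Conversely, suppose $G$ is DTC(1), with DTC ultrafilter $v$ (so $v\iin\beta G\setminus G$ by Lemma~\ref{lem:prelim}\ref{lem:prelim:ix}). Pick $j$ with $Hs_j\iin v$ and write $v=v'\aproda s_j$ with $v'\iin\beta H\setminus H$ by~(a); I claim $v'$ is a DTC ultrafilter for $\beta H$. Given $u'\iin\beta H\setminus H$, we have $u'\iin\beta G\setminus G$, hence $u'\aproda v\neq u'\aprodb v$. Associativity together with $v'\aproda s_j=v'\aprodb s_j$ gives
\[
u'\aproda v=(u'\aproda v')\aproda s_j,\qquad u'\aprodb v=(u'\aprodb v')\aproda s_j,
\]
and injectivity of right translation by $s_j$ on $\beta G$ forces $u'\aproda v'\neq u'\aprodb v'$. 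As these products lie in $\beta H$, the same inequality holds there, so $v'$ witnesses that $H$ is DTC(1).

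The computations are all short once~(a) and~(b) are in place; the only thing to watch is the left/right bookkeeping. One must use a \emph{left}-coset decomposition of $G$ to lift a DTC ultrafilter from $H$ to $G$, and a \emph{right}-coset decomposition to bring one down from $G$ to $H$, since in each case it is exactly translation on that side that can be pushed past both $\aproda$ and $\aprodb$ (via Lemma~\ref{lem:prelim}) and is invertible on $\beta G$.
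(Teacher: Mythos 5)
Your proof is correct, and in substance it is the paper's argument: decompose $G$ into finitely many cosets of $H$, move ultrafilters between $\beta G$ and $\beta H$ by translating with a coset representative, and push that representative past both products using associativity and Lemma~\ref{lem:prelim}. The direction lifting a DTC ultrafilter from $H$ to $G$ is essentially identical to the paper's (left cosets for $u$, factor $u=t_i\aproda u'$ with $u'\iin\beta H\setminus H$). In the descent from $G$ to $H$ you diverge slightly: you use a right-coset decomposition, set $v'=v\aproda s_j^{-1}$, apply DTC-ness of $v$ directly to $u'\iin\beta H\setminus H$, and then cancel the right translation by $s_j$; the paper instead uses left cosets, sets $v'=y^{-1}\aproda v$ where $yH\iin v$, and absorbs the correction into $u$, exploiting $u\aproda y^{-1}=u\aprodb y^{-1}\iin\beta G\setminus G$ (Lemma~\ref{lem:prelim}\ref{lem:prelim:iii}), so that DTC-ness of $v$ is applied to $u\aproda y^{-1}$ rather than to $u$ itself. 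Both routes are equally short; yours additionally uses invertibility of right translation on $\beta G$ (easy, and you justify it), while the paper's needs only that applying $y^{-1}\aproda{}$ to equal elements gives equal results. Consequently your closing remark that one \emph{must} use a right-coset decomposition to bring an ultrafilter down from $G$ to $H$ is overstated: the paper's left-sided variant works for that direction precisely because the stray $y^{-1}$ can be pushed onto $u$ instead of being cancelled. This is only a comment on your aside and does not affect the correctness of your proof.
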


\begin{proof}
As $G$ is the finite union of the left cosets of $H$,
for every ultrafilter $u\iin\beta G$ there is $y\iin G$ such that $yH\iin u$,
and then $y^{-1}\aproda u = y^{-1}\aprodb u \iin \beta H$.

When $G$ is DTC(1), let $v\iin \beta G$ be a DTC ultrafilter for $\beta G$,
and let $y\iin G$ be such that $y^{-1}\aproda v = y^{-1}\aprodb v \iin \beta H$.
Take any $u\iin\beta H \setminus H$.
Then $u \aproda y^{-1} = u \aprodb y^{-1} \iin \beta G \setminus G$ and
\[
u \aproda (y^{-1}\aproda v)
= (u \aproda y^{-1}) \aproda v
\neq (u \aprodb y^{-1}) \aprodb v
= u \aprodb (y^{-1}\aprodb v) .
\]
Thus $y^{-1}\aproda v$ is a DTC ultrafilter for $\beta H$, and $H$ is DTC(1).

When $H$ is DTC(1),
let $v\iin\beta H \subseteq \beta G$ be a DTC ultrafilter for $\beta H$.
Take any $u\iin\beta G \setminus G$,
and let $y\iin G$ be such that
$y^{-1}\aproda u = y^{-1}\aprodb u \iin \beta H \setminus H$.
Then
\[
y^{-1}\aproda (u\aproda v)
= (y^{-1}\aproda u)\aproda v
\neq (y^{-1}\aprodb u)\aprodb v
= y^{-1}\aprodb (u\aprodb v)
= y^{-1}\aproda (u\aprodb v),
\]
hence $u\aproda v \neq u\aprodb v$.
Thus $v$ is a DTC ultrafilter for $\beta G$, and $G$ is DTC(1).
\end{proof}

By~\ref{lem:prelim}\ref{lem:prelim:viii} and~\ref{th:finindex},
every infinite virtually abelian group is DTC(2).
Next we shall prove that even every infinite virtually BFC group is DTC(2).

\begin{lemma}
    \label{lem:finhom}
Let $G$ be an infinite group, $H$ a finite group,
and $\alpha\colon G \to H$ a surjective homomorphism.
Then $\alpha$ extends to a homomorphism
$\overline{\alpha}\colon (\beta G,\aproda)\to H$ such that
for every $u\iin\beta G$ and every $h\iin H$ we have
$\alpha^{-1}(h)\iin u$ if and only if $\overline{\alpha}(u)=h$.
\end{lemma}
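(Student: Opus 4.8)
The plan is to define $\overline{\alpha}$ by the only formula compatible with the required equivalence, and then to verify directly that this map respects $\aproda$, using only that $H$ is finite together with Lemma~\ref{lem:prelim}\ref{lem:prelim:iv}.

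First I would observe that, since $H$ is finite, the fibres $\alpha^{-1}(h)$ for $h\iin H$ form a finite partition of $G$, so for each $u\iin\beta G$ exactly one of them lies in $u$. I would \emph{define} $\overline{\alpha}(u)$ to be that unique element $h\iin H$. With this definition the stated equivalence ``$\alpha^{-1}(h)\iin u$ if and only if $\overline{\alpha}(u)=h$'' holds by construction, and since $x\iin\alpha^{-1}(\alpha(x))$ for every $x\iin G$, the map $\overline{\alpha}$ restricts to $\alpha$ on $G$ under the identification of points of $G$ with principal ultrafilters. (Equivalently, as $H$ is finite one has $\beta H=H$, and $\overline{\alpha}$ is just the \v{C}ech--Stone extension of $\alpha\colon G\to H$; what then remains is precisely to check that this extension carries $\aproda$ to the group operation of $H$.)

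Next I would check the homomorphism property. Fix $u,v\iin\beta G$ and put $g:=\overline{\alpha}(u)$ and $h:=\overline{\alpha}(v)$, so $\alpha^{-1}(g)\iin u$ and $\alpha^{-1}(h)\iin v$. By Lemma~\ref{lem:prelim}\ref{lem:prelim:iv} the product set $\alpha^{-1}(g)\,\alpha^{-1}(h)$ belongs to $u\aproda v$. Since $\alpha$ is a homomorphism, $\alpha(xy)=\alpha(x)\alpha(y)=gh$ whenever $x\iin\alpha^{-1}(g)$ and $y\iin\alpha^{-1}(h)$, so $\alpha^{-1}(g)\,\alpha^{-1}(h)\subseteq\alpha^{-1}(gh)$; as a superset of a member of the ultrafilter $u\aproda v$, the set $\alpha^{-1}(gh)$ therefore lies in $u\aproda v$, and by the defining property this says $\overline{\alpha}(u\aproda v)=gh=\overline{\alpha}(u)\overline{\alpha}(v)$.

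I do not expect a genuine obstacle: the whole argument is short and local, invoking no structure of $\beta G$ beyond finiteness of the fibre partition and part~\ref{lem:prelim:iv} of Lemma~\ref{lem:prelim}. The only small points to keep in mind are that the relevant fibres are nonempty (which follows from surjectivity of $\alpha$, although in fact only their membership in $u$ and $v$ is used) and that part~\ref{lem:prelim:iv} is applied to the correct product $u\aproda v$ — the parallel statement for $\aprodb$ would yield the analogous fact for $(\beta G,\aprodb)$ but is not needed here.
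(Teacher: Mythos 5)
Your proposal is correct and follows essentially the same route as the paper: define $\overline{\alpha}(u)$ as the unique $h$ with $\alpha^{-1}(h)\iin u$ (using the finite fibre partition) and verify the homomorphism property via Lemma~\ref{lem:prelim}\ref{lem:prelim:iv}. The only cosmetic difference is that the paper asserts the equality $\alpha^{-1}(fh)=\alpha^{-1}(f)\alpha^{-1}(h)$ outright, while you use the inclusion $\alpha^{-1}(g)\alpha^{-1}(h)\subseteq\alpha^{-1}(gh)$ together with upward closure of ultrafilters, which is an equally valid (and marginally more economical) way to finish.
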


In fact $\overline{\alpha}$ is the unique continuous extension of $\alpha$ to $\beta G$.
This observation is not needed in the sequel.

\begin{proof}
Write $\overline{\alpha}(u):=h$ when
$u\iin\beta G$, $h\iin H$ and $\alpha^{-1}(h)\iin u$.
That defines a mapping from $\beta G$ onto $H$,
because the sets $\alpha^{-1}(h)$, $h\iin H$, form a finite partition of $G$,
and therefore for every $u\iin\beta G$ there is a unique $h\iin H$ such that
$\alpha^{-1}(h)\iin u$.

Obviously $\overline{\alpha}(x)=\alpha(x)$ for $x\iin G$.
To prove $\overline{\alpha}$ is a homomorphism, take any $u,v\iin\beta G$
and let $f:=\overline{\alpha}(u)$, $h:=\overline{\alpha}(v)$.
Then $\alpha^{-1}(fh) = \alpha^{-1}(f) \alpha^{-1}(h) \iin u \aproda v$
by \ref{lem:prelim}\ref{lem:prelim:iv},
hence $\overline{\alpha}(u \aproda v)=fh$.
\end{proof}

\begin{theorem}
    \label{th:suff}
Let $G$ be an infinite FC group for which there exists $n\iin\omega$, $n\geq 1$,
such that $x^n y = y x^n$ for all $x,y\iin G$.
Then $G$ is DTC(2).
\end{theorem}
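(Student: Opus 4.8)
The plan is to prove that $G$ is not DTC(1); since $G$ is infinite (hence not DTC(0)) and Theorem~\ref{th:twopt} provides a two-point DTC set, this yields that $G$ is DTC(2). As a singleton $\{v\}$ with $v\in G$ is never a DTC set (Lemma~\ref{lem:prelim}\ref{lem:prelim:iii}), it suffices to show: for every $v\in\beta G\setminus G$ there is some $u\in\beta G\setminus G$ with $u\aproda v = u\aprodb v$.

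First I would set up two things. Structurally, $Z:=\langle x^n : x\in G\rangle$ lies in $Z(G)$, so $Z$ is abelian and normal and every element of $Z$ commutes with every ultrafilter; moreover $G/Z$ has exponent dividing $n$, so, being an FC quotient, it is FC, hence (torsion of finite exponent) locally finite. The second is a general observation about $\aproda,\aprodb$: if $w\in\beta G$ is conjugation-invariant (i.e. $gBg^{-1}\in w \iff B\in w$ for all $g\in G$ and $B\subseteq G$), then $x^{-1}B\in w \iff Bx^{-1}\in w$ for all $x\in G$ and $B\subseteq G$, as one sees by conjugating the relation ``$Bx^{-1}\in w$'' by $x$ and using invariance. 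Comparing this with the definitions, for conjugation-invariant $u$ and arbitrary $v$ we get $u\aprodb v = \{A : \{x : Ax^{-1}\in u\}\in v\} = \{A : \{x : x^{-1}A\in u\}\in v\} = v\aproda u$; so if a conjugation-invariant $u\in\beta G\setminus G$ commutes with $v$ in $(\beta G,\aproda)$, then $u\aproda v = v\aproda u = u\aprodb v$. (Taking $v=w$ shows in passing that a conjugation-invariant $w\notin G$ is never a DTC ultrafilter.) The goal thus reduces to producing, for each $v$, a conjugation-invariant $u\in\beta G\setminus G$ that $\aproda$-commutes with $v$ — equivalently, a conjugation-invariant $u\in\beta G\setminus G$ with $u\aproda v = u\aprodb v$.

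The main step is this construction. If $[G:Z(G)]<\infty$ then $G$ is virtually abelian and is DTC(2) already by Lemma~\ref{lem:prelim}\ref{lem:prelim:viii} together with Theorem~\ref{th:finindex}, so I may assume $[G:Z(G)]=\infty$. In general I would exhaust the locally finite group $G/Z$ by an ascending net of finite subgroups, pull these back to subgroups $Z\le G_\alpha\le G$ with $[G_\alpha:Z]<\infty$ and $G=\bigcup_\alpha G_\alpha$ (each $G_\alpha$ is then centre-by-finite), and build $u$ by transfinite recursion along $\{G_\alpha\}$ as a limit of a suitably chosen net of group elements. At each stage one adds elements so as to (i) keep the eventual $u$ conjugation-invariant, and (ii) force, for every $A\subseteq G$, the sets $\{x : x^{-1}A\in v\}$ and $\{x : Ax^{-1}\in u\}$ witnessing ``$A\in u\aproda v$'' and ``$A\in u\aprodb v$'' to be simultaneously $u$-large and $v$-large. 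Centrality of $Z$ is what makes the two-sided bookkeeping in (ii) coherent, while finiteness of $[G_\alpha:Z]$ is what keeps the recursion going; this is morally the two-sided analogue of the separation argument in Lemma~\ref{lem:separation}.

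The hard part, I expect, is precisely carrying out this construction, and within it the case in which $Z$ — equivalently $Z(G)$ — is finite, so that $G$ itself is locally finite of bounded exponent (infinite extraspecial $p$-groups, or groups like $S_3^{(\omega)}$ with $x^6=e$, are of this type and may even have trivial centre). There one has no infinite central subgroup to absorb the recursion, and must run it using only the local finiteness of $G$ and the bounded indices $[G_\alpha:Z]$, keeping precise control of how conjugation interacts with the left and right translations across the finite pieces; arranging conjugation-invariance of $u$ simultaneously with the largeness conditions (ii) is the delicate point.
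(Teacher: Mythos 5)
Your reduction is the right one, and your key observation is sound: the property you call conjugation-invariance of $u$ (equivalently, $x^{-1}A\iin u \Leftrightarrow Ax^{-1}\iin u$ for all $x\iin G$, $A\subseteq G$) gives $u\aprodb v = v\aproda u$, so it suffices to find, for each $v\iin\beta G\setminus G$, such a $u\iin\beta G\setminus G$ that also $\aproda$-commutes with $v$. This is in fact exactly the mechanism behind the paper's proof. But your proposal does not actually produce such a $u$: the transfinite recursion along the pullbacks of finite subgroups of $G/Z$ is only described in outline, and the two requirements you list --- conjugation-invariance and commutation with an \emph{arbitrary} prescribed $v$ --- are precisely the points you leave unresolved (you say so yourself in the last paragraph). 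Note that conjugation-invariant nonprincipal ultrafilters are cheap in an FC group (any nonprincipal $u$ containing every centralizer $G_{yy}$, $y\iin G$, works, since $A\cap G_{yy}=y^{-1}Ay\cap G_{yy}$ and the centralizers have finite index, hence the finite intersection property with the cofinite sets); the genuine obstruction is making such a $u$ satisfy $u\aproda v=v\aproda u$ for the given $v$, and no recursion scheme you describe addresses why this can be arranged. So as it stands there is a real gap, not just missing routine detail.

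The missing idea, which the paper uses, is to build $u$ out of $v$ itself: take $u:=v^{\aproda n}$. Commutation with $v$ is then automatic from associativity, $v^{\aproda n}\aproda v=v^{\aproda(n+1)}=v\aproda v^{\aproda n}$, and $v^{\aproda n}\iin\beta G\setminus G$ by Lemma~\ref{lem:prelim}\ref{lem:prelim:x}. What remains is to show that $v^{\aproda n}$ has your two-sided invariance property, and this is where both hypotheses enter: for each $y\iin G$ the conjugacy class $[y]_G$ is finite (FC), so conjugation gives a homomorphism $\alpha\colon G\to\SYM([y]_G)$ onto a finite group in which every element has order dividing $n$ (centrality of $n$-th powers); extending $\alpha$ over $\beta G$ as in Lemma~\ref{lem:finhom} yields $\overline{\alpha}(v^{\aproda n})=\overline{\alpha}(v)^n=e$, hence $E:=\alpha^{-1}(e)\iin v^{\aproda n}$, and on $E$ left and right translation by $y$ agree, giving $y^{-1}A\iin v^{\aproda n}\Leftrightarrow Ay^{-1}\iin v^{\aproda n}$. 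This replaces your entire recursive construction (including the troublesome finite-centre case) with a short computation; if you want to salvage your approach, the shortest repair is to discard the abstract construction of $u$ and prove this invariance statement for $v^{\aproda n}$ instead.
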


\begin{proof}
First we shall prove that
\begin{equation}
\label{eq:xny}
v^{\aproda n}\aprodb u = u \aproda v^{\aproda n} \quad\text{for all}\quad
u,v\iin\beta G.
\end{equation}

Take any $y\iin G$.
Denote by $\SYM([y]_G)$ the group of all permutations of the finite set $[y]_G$.
Define the homomorphism $\alpha\colon G \to \SYM([y]_G)$ by
\[
\alpha(x)(z):=x^{-1}zx, \quad x\iin G, z\iin [y]_G,
\]
and let $H:=\alpha(G)\subseteq \SYM([y]_G)$.
Write $E:=\alpha^{-1}(e_H)$.
Then $xy=yx$ for $x\iin E$, hence
\[
E\cap y^{-1}A = E\cap Ay^{-1}
\]
for every $A\subseteq G$.

Take any $v\iin\beta G$ and $A\subseteq G$.
Let $\overline{\alpha}\colon (\beta G,\aproda)\to H$
be the extension of $\alpha$ as in Lemma~\ref{lem:finhom}.
As $x^{-n} z  x^n = z$ for all $x\iin G$ and $z\iin [y]_G$, we get
$h^n=e_H$ for every $h\iin H$.
Hence $\overline{\alpha}(v^{\aproda n}) = \overline{\alpha}(v)^n = e_H$,
and thus $E \iin v^{\aproda n}$.
It follows that $y^{-1}A \iin v^{\aproda n}$ if and only if
$E\cap Ay^{-1} = E \cap y^{-1}A \iin v^{\aproda n}$
if and only if $Ay^{-1} \iin v^{\aproda n}$.

We have proved
\[
y^{-1}A \iin v^{\aproda n} \Leftrightarrow Ay^{-1} \iin v^{\aproda n}
\]
for all $y\iin G$, $v\iin\beta G$ and $A\subseteq G$.
Now~(\ref{eq:xny}) follows from the definition of $\aproda$ and $\aprodb$.

Next take any $v\iin\beta G \setminus G$.
Then $v^{\aproda n}\iin\beta G \setminus G$ by~\ref{lem:prelim}\ref{lem:prelim:x},
and from~(\ref{eq:xny}) we get
\[
v^{\aproda n}\aproda v = v^{\aproda (n+1)}
= v \aproda v^{\aproda n} = v^{\aproda n} \aprodb v .
\qedhere
\]
\end{proof}

\begin{corollary}
    \label{cor:virtBFC}
Every infinite virtually BFC group is DTC(2).
\end{corollary}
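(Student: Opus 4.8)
The plan is to reduce the statement to Theorem~\ref{th:suff}. Let $G$ be an infinite virtually BFC group and fix a subgroup $H$ of $G$ of finite index that is BFC; since $G$ is infinite, so is $H$. The key external ingredient is B.H.~Neumann's theorem that a group is BFC precisely when its derived subgroup is finite (see, e.g.,~\cite{Robinson1972fcg}); applying it to $H$, the subgroup $H' := [H,H]$ is finite. Set $C := C_H(H')$, the centraliser of $H'$ in $H$. Conjugation defines a homomorphism from $H$ into the finite group $\SYM(H')$ whose kernel is $C$, so $C$ has finite index in $H$, hence finite index in $G$; in particular $C$ is infinite.

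Next I would check that $C$ satisfies the hypotheses of Theorem~\ref{th:suff}. First, $C$ is an FC group: for $y\iin C$ the conjugacy class $[y]_C$ is contained in the finite class $[y]_H$. Second, there is $n\geq 1$ with $x^n y = y x^n$ for all $x,y\iin C$. Indeed $[C,C]\subseteq [H,H] = H'$, and by construction every element of $C$ commutes with every element of $H'$, so $[C,C]\subseteq Z(C)$, i.e.\ $C$ is nilpotent of class at most $2$. In any such group $[x^n,y] = [x,y]^n$ for all $x,y$ and all $n\geq 1$. Taking $n$ to be the exponent of the finite group $H'$ (so $n\geq 1$), we get $[x^n,y] = [x,y]^n = e_C$ for all $x,y\iin C$, since $[x,y]\iin H'$; hence $x^n y = y x^n$ for all $x,y\iin C$. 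Theorem~\ref{th:suff} then gives that $C$ is DTC(2).

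Finally, since $C$ is DTC(2) it is not DTC(1), and as $C$ has finite index in $G$, Theorem~\ref{th:finindex} shows $G$ is not DTC(1) either; being infinite, $G$ is therefore DTC(2) by Theorem~\ref{th:twopt}.

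The main obstacle is conceptual rather than computational: one must pass to the finite-index subgroup $H$ before any commutator identities become available, because a virtually BFC group need not itself be FC (for instance, the infinite dihedral group), and one needs B.H.~Neumann's finiteness theorem to locate the finite normal subgroup $H'$ whose centraliser in $H$ carries the power-centrality needed by Theorem~\ref{th:suff}. The commutator manipulations in a class-$2$ nilpotent group and the bookkeeping that $C$ is infinite of finite index in $G$ are routine; the only minor point is that the argument also handles the degenerate case $H'=\{e_H\}$, where $n=1$ and $C$ is abelian.
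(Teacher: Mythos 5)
Your proof is correct, but it takes a genuinely different route from the paper's. The paper also reduces to the BFC case via Theorem~\ref{th:finindex}, but then applies Theorem~\ref{th:suff} \emph{directly} to the infinite BFC subgroup $H$ itself: if $m:=\max_{y\in H}\card{[y]_H}$, then conjugation by any $x$ permutes the class $[y]_H$, a set of size at most $m$, so the permutation induced by $x$ has order dividing $m!$ and hence $x^{m!}y=yx^{m!}$ for all $x,y\in H$; thus one may take $n=m!$ and no further subgroup is needed. You instead invoke B.H.~Neumann's theorem that BFC is equivalent to finiteness of the derived subgroup, pass to the centraliser $C=C_H([H,H])$ (again of finite index), observe $C$ is FC and nilpotent of class at most $2$, and use the class-$2$ identity $[x^n,y]=[x,y]^n$ with $n$ the exponent of $[H,H]$; all of these steps check out, including the final descent $C$ DTC(2) $\Rightarrow$ $G$ not DTC(1) (Theorem~\ref{th:finindex}) $\Rightarrow$ $G$ DTC(2) (Theorem~\ref{th:twopt}), which is exactly how the paper uses its reduction as well. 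The trade-off: the paper's argument is shorter and self-contained, needing only the elementary ``factorial of the class-size bound'' observation, whereas yours leans on a substantially deeper external theorem (Neumann's BFC theorem, available in the cited Robinson reference) but in return exhibits a structurally transparent witness --- a class-$2$ nilpotent finite-index subgroup on which the power-centrality hypothesis of Theorem~\ref{th:suff} holds with $n=\exp([H,H])$, typically much smaller than $m!$, though this sharper $n$ plays no role in the conclusion.
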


\begin{proof}
In view of Theorem~\ref{th:finindex} it is enough to prove that every
infinite BFC group is DTC(2).
For any such $G$,
apply Theorem~\ref{th:suff}
with $n$ equal to the factorial of $\max_{y\in G} \card{[y]_G}$.
\end{proof}

In Example~\ref{ex:redprod} we produce a countable group that is DTC(2) but not
virtually BFC.

%%%%%%%%%%%%%%%%%%%%%%%%%%%%%%%%%%%%%%%%%%%%%%%%%%%%%%%%%%%%%%%%%%%%%%%%%%%%%%%%

\section{Countable groups}
    \label{sec:countable}

Let $G$ be a countable infinite group,
$G= \bigcup_{n\in\omega} F_n$ where $F_0\subseteq F_1\subseteq F_2 \subseteq \dots$
are finite sets.
Let $v\iin\beta G$ be a cluster point of a sequence $\{x_n\}_{n\in\omega}$ in $G$,
and set $W:= \bigcup_n F_n x_n$.
From the definition of $\aproda$ we get
$W \iin u \aproda v$ for every $u\iin\beta G$.
The next theorem describes a condition that allows a choice of $x_n$ for which
$W \not\in u \aprodb v$ for every $u\iin\beta G \setminus G$,
so that $v$ is a DTC ultrafilter.

\pagebreak
\begin{theorem}
    \label{th:countable}
Consider four conditions for a countable infinite group $G$:
\begin{enumerate}[label=({\roman*})]
\item\label{th:countable:conda}
\nopagebreak
There is $V\subseteq G$ such that for every finite $F\subseteq G$
there is $x\iin G$ for which
\[
x\not\in F V \cup F x (F \setminus V) .
\]
\item\label{th:countable:condb}
There are finite $F_n\subseteq G$, $n\iin\omega$,
such that $e_G \iin F_n = F^{-1}_n$
and $F_n F_n \subseteq F_{n+1}$ for all $n$, and
$G= \bigcup_{n\in\omega} F_n$.
There is a sequence $\{x_n\}_{n\in\omega}$ in $G$ such that
\begin{align}
\label{eq:a1}
F_n x_n x^{-1}_i \cap F_k x_k x^{-1}_j & = \emptyset \quad\text{for}\;\; i,j< k < n \\
\label{eq:a2}
F_n x_n x^{-1}_i \cap F_n x_n x^{-1}_j & = \emptyset \quad\text{for}\;\; i < j < n
\end{align}
\item\label{th:countable:condc}
There are $v\iin\beta G \setminus G$ and $W\subseteq G$ such that
$W\iin u\aproda v$ for all $u\iin\beta G$ and
$W\not\in u\aprodb v$ for all $u\iin\beta G \setminus G$.
\item\label{th:countable:condd}
$G$ is DTC(1).
\end{enumerate}
Then \ref{th:countable:conda}$\Rightarrow$\ref{th:countable:condb}%
$\Rightarrow$\ref{th:countable:condc}$\Rightarrow$\ref{th:countable:condd}.
\end{theorem}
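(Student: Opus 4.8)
The plan is to establish the three implications in Theorem~\ref{th:countable} in turn, with the bulk of the work being the first one.

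\textbf{Implication \ref{th:countable:conda}$\Rightarrow$\ref{th:countable:condb}.}
Given the set $V$ from \ref{th:countable:conda}, I would first fix an exhausting sequence of finite symmetric sets $F_n$ with $e_G\iin F_n=F_n^{-1}$, $F_nF_n\subseteq F_{n+1}$ and $G=\bigcup_n F_n$; this is routine since $G$ is countable. The element $x_n$ should then be chosen by recursion on $n$, and the key point is to encode both the ``separation'' requirements \eqref{eq:a1}, \eqref{eq:a2} and the future-proofing they need into a single finite exclusion set, and then to invoke the hypothesis \ref{th:countable:conda} with an appropriately large finite $F$. Concretely, when $x_0,\dots,x_{n-1}$ have been chosen, the conditions \eqref{eq:a1} for the pair $(k,n)$ with $k<n$ say that $F_n x_n$ must avoid the finite set $F_k x_k x_j^{-1} x_i$ for all $i,j<k$, i.e. $x_n\notin F_n^{-1} F_k x_k x_j^{-1} x_i$; condition \eqref{eq:a2} says $x_n\notin F_n^{-1}F_n x_n x_i^{-1}x_j$ for $i<j<n$, which is a condition of the form $x_n\notin F x_n (F\setminus V)$-type once one recognizes $x_i^{-1}x_j$ as ranging over a finite set. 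The art is to choose the finite set $F$ and the set $V$-slot in the statement of \ref{th:countable:conda} so that ``$x\notin FV\cup Fx(F\setminus V)$'' simultaneously kills all of \eqref{eq:a1} and \eqref{eq:a2}; I expect the $FV$ part of the hypothesis to absorb the left-translation conditions \eqref{eq:a1} and the $Fx(F\setminus V)$ part to absorb the conjugation-type conditions \eqref{eq:a2}. This matching of the combinatorics of \eqref{eq:a1}--\eqref{eq:a2} to the exact shape of \ref{th:countable:conda} is the main obstacle, and the reason the condition \ref{th:countable:conda} was formulated in that particular asymmetric way.

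\textbf{Implication \ref{th:countable:condb}$\Rightarrow$\ref{th:countable:condc}.}
Let $v\iin\beta G$ be a cluster point of the sequence $\{x_n\}$, which lies in $\beta G\setminus G$ because the separation conditions force the $x_n$ to be eventually distinct, in fact to leave every finite set. Set $W:=\bigcup_n F_n x_n$. As noted in the paragraph preceding the theorem, $x^{-1}W\iin v$ for every $x\iin G$ (since for $x\iin F_k$ we have $x^{-1}F_n x_n\subseteq x^{-1}F_n\subseteq F_{n+1}$... more simply $\{x_n\mid n\geq k\}\subseteq x^{-1}W$ whenever $x\iin F_k$), hence $W\iin u\aproda v$ for all $u$. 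For the other half I would show $W\not\in u\aprodb v$ for $u\iin\beta G\setminus G$ exactly as in the proof of Theorem~\ref{th:twopt}: by definition of $\aprodb$ it suffices to exhibit, for $v$-almost all $n$ (in fact for a set of $n$ in $v$), that $Wx_n^{-1}\not\in u$; and conditions \eqref{eq:a1}--\eqref{eq:a2} are precisely what is needed to show that $Wx_n^{-1}\cap Wx_m^{-1}$ is contained in a fixed finite set for distinct large $n,m$, so that the sets $Wx_n^{-1}$ are ``almost disjoint'' and at most one of them can lie in the nonprincipal ultrafilter $u$. A small calculation with the nesting $F_nF_n\subseteq F_{n+1}$ converts \eqref{eq:a1}, \eqref{eq:a2} into this almost-disjointness statement.

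\textbf{Implication \ref{th:countable:condc}$\Rightarrow$\ref{th:countable:condd}.}
This is immediate: if $W\iin u\aproda v$ and $W\not\in u\aprodb v$ for every $u\iin\beta G\setminus G$, then in particular $u\aproda v\neq u\aprodb v$ for every such $u$, so $v$ is a DTC ultrafilter and $G$ is DTC(1) by definition. I expect this step to take one sentence. So the real content is \ref{th:countable:conda}$\Rightarrow$\ref{th:countable:condb}, and within it the recursive construction and the bookkeeping that shows a single application of \ref{th:countable:conda} per step suffices.
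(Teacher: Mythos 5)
Your implications \ref{th:countable:condb}$\Rightarrow$\ref{th:countable:condc} and \ref{th:countable:condc}$\Rightarrow$\ref{th:countable:condd} are essentially the paper's own argument and are fine: the $x_n$ are pairwise distinct by \eqref{eq:a2}, so a cluster point $v$ of the sequence is nonprincipal; $x^{-1}W\supseteq\{x_n\mid n\geq k\}$ for $x\iin F_k$ gives $W\iin u\aproda v$; and \eqref{eq:a1}--\eqref{eq:a2} make the sets $Wx_i^{-1}$ pairwise almost disjoint, so at most one can lie in a nonprincipal $u$ and hence $\{x\mid Wx^{-1}\iin u\}\not\in v$. The problem is \ref{th:countable:conda}$\Rightarrow$\ref{th:countable:condb}, which you correctly identify as the main step but do not actually carry out: you reduce \eqref{eq:a1} and \eqref{eq:a2} to the avoidance conditions $x_n\not\in F_n^{-1}F_kx_kx_j^{-1}x_i$ and $x_n\not\in F_n^{-1}F_nx_nx_i^{-1}x_j$, and then say that ``the art is to choose the finite set $F$ and the set $V$-slot'' so that one application of \ref{th:countable:conda} kills everything. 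But $V$ is not a slot you get to fill at each stage: it is a single fixed set supplied once by \ref{th:countable:conda}. Your plan for the \eqref{eq:a2}-type conditions is to put the elements $x_i^{-1}x_j$ into $F$ and use the clause $x\not\in Fx(F\setminus V)$; this fails outright if some $x_i^{-1}x_j$ happens to lie in $V$, since \ref{th:countable:conda} gives no control whatsoever over $Fxg$ for $g\iin V$, and no enlargement of $F$ repairs that. You gesture at ``future-proofing'' but never say what it is, and this is exactly where the one genuinely new idea of the proof lives.

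The missing device is to strengthen the recursion: at stage $n$, besides the avoidance conditions above, require also $x_n\not\in x_iV$ for all $i<n$, i.e.\ $x_i^{-1}x_n\not\in V$. Then at every later stage all quotients $x_i^{-1}x_j$ with $i<j$ lie outside $V$, so once they are placed in $F$ they lie in $F\setminus V$ and the clause $x\not\in Fx(F\setminus V)$ really does exclude $F_{n+1}x_nx_i^{-1}x_j$, yielding \eqref{eq:a2}. The extra requirements $x_n\not\in x_iV$, together with the finitely many sets $F_{n+1}x_kx_j^{-1}x_i$ ($i,j<k<n$) needed for \eqref{eq:a1}, are absorbed by the clause $x\not\in FV$, using the observation (which your sketch also omits) that necessarily $e_G\iin V$ --- apply \ref{th:countable:conda} to $F=\{e_G\}$ --- so that $F\subseteq FV$ and any finite exclusion set can simply be dumped into $F$. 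With these two additions (the auxiliary clause $x_n\not\in x_iV$ and the remark $e_G\iin V$), a single application of \ref{th:countable:conda} per step, with $F$ containing $F_{n+1}$, the points $x_i$ ($i<n$), the quotients $x_i^{-1}x_j$ ($i<j<n$), and the sets $F_{n+1}x_kx_j^{-1}x_i$ ($i,j<k<n$), does suffice, and this is precisely how the paper argues.
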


\begin{proof}
Write $G= \bigcup_{n\in\omega} F_n$ with finite sets $F_n\subseteq G$
such that $e_G \iin F_n = F^{-1}_n$ and $F_n F_n \subseteq F_{n+1}$ for all $n$.
Assuming \ref{th:countable:conda},
$e_G \iin V$ because otherwise we would have
$x\iin \{e_G\} x (\{e_G\} \setminus V)$ for all $x\iin G$.
A recursive construction yields a sequence of $x_n$ such that
\begin{align}
\label{eq:b1}
x_n & \not\in x_i V \cup F_{n+1} x_k x^{-1}_i x_j \quad\text{for}\;\; i,j < k < n \\
\label{eq:b2}
x_n & \not\in F_{n+1} x_n x^{-1}_i x_j \quad\quad\quad\;\;\text{for}\;\; i < j < n
\end{align}
Since (\ref{eq:a1}) follows from (\ref{eq:b1}) and (\ref{eq:a2}) follows from (\ref{eq:b2}),
this proves \ref{th:countable:conda}$\Rightarrow$\ref{th:countable:condb}.

Now assume \ref{th:countable:condb}.
From (\ref{eq:a2}) we get $x_i \neq x_j$ for $i\neq j$.
Let $v\iin\beta G \setminus G$ be a cluster point of the sequence $\{x_n\}_n$,
and put $W:= \bigcup_n F_n x_n$.
Then $W \iin u \aproda v$ for every $u\iin\beta G$.

Take any $u\iin\beta G \setminus G$.
From (\ref{eq:a1}) and (\ref{eq:a2}), for $i<j$ we get
\[
W x^{-1}_i \cap W x^{-1}_j
= \left( \bigcup_{n\in\omega} F_n x_n x^{-1}_i \right)
\cap \left( \bigcup_{k\in\omega} F_k x_k x^{-1}_j \right)
\subseteq \bigcup_{k=0}^j F_k x_k x^{-1}_j
\]
Thus the intersection $W x^{-1}_i \cap W x^{-1}_j$ is finite for $i\neq j$.
Hence there is at most one $i\iin\omega$ for which $W x^{-1}_i \iin u$.
Since $v\iin\beta G \setminus G$ and $\{x_i \mid i\iin\omega\} \iin v$,
it follows that $\{ x \mid W x^{-1} \iin u \} \not\in v$,
and $W\not\in u \aprodb v$ from the definition of $\aprodb$.
That proves \ref{th:countable:condb}$\Rightarrow$\ref{th:countable:condc}.

Obviously \ref{th:countable:condc}$\Rightarrow$\ref{th:countable:condd}.
\end{proof}

We do not know if condition~\ref{th:countable:condd} in Theorem~\ref{th:countable}
is inherited from quotients, but condition~\ref{th:countable:conda} is:

\begin{proposition}
    \label{prop:quotient}
Let condition~\ref{th:countable}\ref{th:countable:conda} hold for a group $G$
Let $H$ be a group with a surjective homomorphism $\pi\colon H \to G$.
Then condition~\ref{th:countable}\ref{th:countable:conda} holds also for $H$ in place of $G$
and $\pi^{-1}(V)$ in place of $V$.
\end{proposition}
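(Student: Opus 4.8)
The plan is to transfer a witness from $G$ up to $H$ by taking preimages, and then verify the defining non-membership pointwise using that $\pi$ is a homomorphism. Concretely, I would first fix an arbitrary finite $F'\subseteq H$ and set $F:=\pi(F')$, a finite subset of $G$. Applying condition~\ref{th:countable}\ref{th:countable:conda} for $G$ to this $F$ produces an element $x\iin G$ with $x\not\in FV\cup Fx(F\setminus V)$. Using surjectivity of $\pi$, I would pick any $x'\iin H$ with $\pi(x')=x$, and claim that $x'$ witnesses condition~\ref{th:countable}\ref{th:countable:conda} for $H$ with $V':=\pi^{-1}(V)$, i.e.\ $x'\not\in F'V'\cup F'x'(F'\setminus V')$.

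The verification splits into two cases, both handled by pushing forward along $\pi$. The key observation is $\pi(F'\setminus V')\subseteq F\setminus V$: if $h\iin F'\setminus V'$ then $\pi(h)\iin\pi(F')=F$, and since $h\not\in\pi^{-1}(V)$ also $\pi(h)\not\in V$. Now if $x'\iin F'V'$, write $x'=f'v'$ with $f'\iin F'$, $v'\iin V'$; applying $\pi$ gives $x=\pi(f')\pi(v')\iin FV$ since $\pi(v')\iin V$, contradicting the choice of $x$. Similarly, if $x'\iin F'x'(F'\setminus V')$, write $x'=f'x'g'$ with $f'\iin F'$, $g'\iin F'\setminus V'$; applying the homomorphism $\pi$ gives $x=\pi(f')\,x\,\pi(g')$ with $\pi(f')\iin F$ and $\pi(g')\iin F\setminus V$, so $x\iin Fx(F\setminus V)$, again a contradiction. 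Hence $x'$ lies outside both sets.

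Since $F'$ was an arbitrary finite subset of $H$, this shows condition~\ref{th:countable}\ref{th:countable:conda} holds for $H$ with the set $\pi^{-1}(V)$, completing the proof. I expect essentially no obstacle here: the argument is a routine pointwise check. The only points to be careful about are that the inclusion $\pi(F'\setminus V')\subseteq F\setminus V$ may be proper (which is harmless, since we only ever map elements of the left-hand sets forward), and that it is precisely the homomorphism property of $\pi$ that lets the middle factor $x'$ in $F'x'(F'\setminus V')$ map to the corresponding middle factor $x$ in $Fx(F\setminus V)$.
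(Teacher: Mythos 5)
Your proposal is correct and follows essentially the same route as the paper: pull back a witness via surjectivity of $\pi$ and verify the exclusion by pushing forward along the homomorphism, using that $\pi(F'\setminus\pi^{-1}(V))\subseteq\pi(F')\setminus V$ (the paper notes this as an equality, but only the inclusion is needed, exactly as you use it).
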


\begin{proof}
Write $U:=\pi^{-1}(V)$.
Then $\pi(F\setminus U) = \pi(F) \setminus V$ for every $F\subseteq H$.

Take any finite $F\subseteq H$.
By the assumption there is $x\iin G$ such that
$x\not\in \pi(F) V \cup \pi(F) x (\pi(F) \setminus V)$.
Let $y\iin H$ be such that $\pi(y)=x$.
Then
\[
\pi(y) \not\in \pi(F) \pi(U) \cup \pi(F) \pi(y) \pi(F \setminus U),
\]
hence $y\not\in F U \cup F y (F \setminus U)$.
\end{proof}

\begin{theorem}
    \label{th:FCinf}
Let $G$ be a countable group such that $\card{G/\FINC(G)}=\aleph_0$.
Then $G$ satisfies condition~\ref{th:countable}\ref{th:countable:conda},
and hence is DTC(1).
\end{theorem}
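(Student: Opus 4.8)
The plan is to verify condition~\ref{th:countable}\ref{th:countable:conda} directly, with the witnessing set $V$ taken to be the FC-centre $\FINC(G)$ itself; the conclusion that $G$ is DTC(1) then follows at once from Theorem~\ref{th:countable}. Note that $\FINC(G)$ is a normal subgroup, and the hypothesis $\card{G/\FINC(G)}=\aleph_0$ says precisely that it has infinite index.

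The first step is to translate the (apparently $x$-dependent) membership $x\iin Fx(F\setminus V)$ into a condition on conjugacy. For finite $F\subseteq G$ and $x\iin G$ one has $x\iin Fx(F\setminus V)$ iff $x=axb$ for some $a\iin F$ and $b\iin F\setminus V$, equivalently $x^{-1}a^{-1}x=b$, i.e.\ $x\iin G_{a^{-1},b}$. Hence the set of ``bad'' points that condition~\ref{th:countable}\ref{th:countable:conda} asks us to avoid equals
\[
F\,\FINC(G)\ \cup\ \bigcup_{a\iin F}\ \bigcup_{b\iin F\setminus\FINC(G)} G_{a^{-1},b}\;.
\]

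The second step is to observe that every piece of this union lies in a coset of a subgroup of infinite index. The part $F\,\FINC(G)$ is a union of at most $\card{F}$ left cosets of $\FINC(G)$, which has infinite index by hypothesis. For a term $G_{a^{-1},b}$ there are two cases. If $a\iin\FINC(G)$, then $a^{-1}\iin\FINC(G)$ while $b\notin\FINC(G)$; since conjugation preserves the normal subgroup $\FINC(G)$, $a^{-1}$ and $b$ are not conjugate and $G_{a^{-1},b}=\emptyset$. If $a\notin\FINC(G)$, then $a^{-1}\notin\FINC(G)$, so $[a^{-1}]_G$ is infinite and the subgroup $G_{a^{-1}a^{-1}}$ has infinite index in $G$ (its index equals $\card{[a^{-1}]_G}$, as computed in Section~\ref{sec:prelim}); and $G_{a^{-1},b}$ is then either empty or a coset of $G_{a^{-1}a^{-1}}$.

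The final step is to invoke B.H.~Neumann's theorem (Lemma~\ref{lem:threelem}\ref{lem:neumann}): a finite union of cosets of subgroups of infinite index cannot be all of $G$, so for every finite $F$ there is $x\iin G$ outside the bad set, i.e.\ with $x\notin FV\cup Fx(F\setminus V)$. (Neumann's theorem is stated for left cosets; one passes to the mixed cover above by writing each left coset $gH$ as the right coset $(gHg^{-1})g$ of the infinite-index conjugate $gHg^{-1}$, or simply by applying the inverse map.) This establishes condition~\ref{th:countable}\ref{th:countable:conda} with $V=\FINC(G)$. The only step with real content is the first reduction: recognizing that, as a constraint on $x$, the set $Fx(F\setminus V)$ collapses to finitely many conjugacy cosets $G_{a^{-1},b}$, and that these are automatically ``thin'' because the relevant elements $b$ lie outside the FC-centre — which is exactly what forces the choice $V=\FINC(G)$. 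The remaining bookkeeping is routine, using only Neumann's covering theorem and the index identity recorded in Section~\ref{sec:prelim}.
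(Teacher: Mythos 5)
Your proof is correct and follows essentially the same route as the paper: it takes $V=\FINC(G)$, rewrites the set to be avoided as $F\,\FINC(G)$ together with the sets $G_{yz}$ for $y\iin F^{-1}$, $z\iin F\setminus V$, notes that each such $G_{yz}$ is either empty or a coset of the infinite-index subgroup $G_{yy}$, and concludes by B.H.~Neumann's covering theorem (Lemma~\ref{lem:threelem}\ref{lem:neumann}). The only difference is that you spell out the bookkeeping the paper leaves implicit, namely the equivalence $x\iin Fx(F\setminus V)\Leftrightarrow x\iin G_{a^{-1}b}$ and the routine left/right coset adjustment needed to apply Neumann's theorem.
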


\begin{proof}
We shall prove that $G$ satisfies condition~\ref{th:countable}\ref{th:countable:conda}
with $V=\FINC(G)$.
Take any finite set $F\subseteq G$.
If $y\iin V$ and $z\iin F\setminus V$ then $G_{yz}=\emptyset$,
and for $y\iin F^{-1}\setminus V$ the index of $G_{yy}$ is infinite.
Therefore by~\ref{lem:threelem}\ref{lem:neumann} there is
\[
x \not\in FV \cup \bigcup_{y\in F^{-1}}\;\;
\bigcup_{z\in F\setminus V} G_{yz}
\]
which means that $x\not\in FV \cup F x (F\setminus V)$.
\end{proof}

\begin{corollary}
    \label{cor:icc}
Every countable ICC group satisfies
condition~\ref{th:countable}\ref{th:countable:conda}, and hence is DTC(1).
\end{corollary}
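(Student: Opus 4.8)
The plan is to recognize that Corollary~\ref{cor:icc} is nothing more than the special case of Theorem~\ref{th:FCinf} obtained by taking $G$ to be an arbitrary countable ICC group. By the definitions in section~\ref{sec:prelim}, a group $G$ is ICC exactly when it is infinite and $\FINC(G)=\{e_G\}$. Under this hypothesis the quotient $G/\FINC(G)$ is (canonically isomorphic to) $G$ itself, which is countably infinite since $\card{G}=\aleph_0$. Hence $\card{G/\FINC(G)}=\aleph_0$, so the hypothesis of Theorem~\ref{th:FCinf} is satisfied.

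First I would apply Theorem~\ref{th:FCinf} to this $G$, which yields directly that $G$ satisfies condition~\ref{th:countable}\ref{th:countable:conda} (with the witnessing set being $V=\FINC(G)=\{e_G\}$, although this explicit description is not needed). Then, invoking the implication chain \ref{th:countable:conda}$\Rightarrow$\ref{th:countable:condb}$\Rightarrow$\ref{th:countable:condc}$\Rightarrow$\ref{th:countable:condd} proved in Theorem~\ref{th:countable}, I conclude that $G$ is DTC(1). That completes the argument.

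There is essentially no obstacle here: the entire content of the corollary is the observation that the defining condition of an ICC group, $\FINC(G)=\{e_G\}$, trivially forces $\card{G/\FINC(G)}=\aleph_0$ for a countable group, so that Theorem~\ref{th:FCinf} applies verbatim. The only point requiring any care is to cite the correct earlier results (Theorem~\ref{th:FCinf} for condition~\ref{th:countable}\ref{th:countable:conda}, and the last implication of Theorem~\ref{th:countable} for DTC(1)); the real work, via B.H.~Neumann's theorem, has already been carried out in the proof of Theorem~\ref{th:FCinf}.
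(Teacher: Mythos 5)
Your proposal is correct and matches the paper's (implicit) argument exactly: the paper states the corollary without a separate proof precisely because, for an ICC group, $\FINC(G)=\{e_G\}$ gives $\card{G/\FINC(G)}=\card{G}=\aleph_0$, so Theorem~\ref{th:FCinf} applies verbatim and Theorem~\ref{th:countable} yields DTC(1).
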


\begin{corollary}
    \label{cor:iccquot}
Every countable group that has an ICC quotient is DTC(1).
\end{corollary}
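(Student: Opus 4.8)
The plan is to combine Corollary~\ref{cor:icc} with Proposition~\ref{prop:quotient} and the chain of implications in Theorem~\ref{th:countable}. Suppose $G$ is a countable group and $\pi\colon G \to Q$ is a surjective homomorphism onto an ICC group $Q$. Since $Q$ is ICC it is in particular countably infinite (being a quotient of a countable group), so Corollary~\ref{cor:icc} applies and $Q$ satisfies condition~\ref{th:countable}\ref{th:countable:conda}.

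Next I would invoke Proposition~\ref{prop:quotient} with the roles set up so that $Q$ plays the part of ``$G$'' and the given group $G$ plays the part of ``$H$'': the surjection $\pi\colon G\to Q$ is exactly the hypothesis of that proposition. The conclusion is that $G$ itself satisfies condition~\ref{th:countable}\ref{th:countable:conda}, with witnessing set $\pi^{-1}(V)$ where $V\subseteq Q$ is the witness for $Q$. In particular $G$ is countably infinite, so Theorem~\ref{th:countable} is applicable to it.

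Finally, the implication chain \ref{th:countable:conda}$\Rightarrow$\ref{th:countable:condb}$\Rightarrow$\ref{th:countable:condc}$\Rightarrow$\ref{th:countable:condd} from Theorem~\ref{th:countable} yields that $G$ is DTC(1), which is the desired conclusion.

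I do not anticipate a real obstacle here: the statement is essentially a bookkeeping corollary that packages the quotient-stability of condition~\ref{th:countable:conda} (Proposition~\ref{prop:quotient}) together with the ICC case (Corollary~\ref{cor:icc}). The only point requiring a word of care is confirming that an ICC quotient of a countable group is itself countable and infinite, so that Corollary~\ref{cor:icc} and Theorem~\ref{th:countable} genuinely apply; this is immediate since a quotient of a countable group is countable, and ``ICC'' includes infiniteness by definition. So the proof is just: ICC quotient $Q$ satisfies \ref{th:countable:conda} by Corollary~\ref{cor:icc}; hence $G$ satisfies \ref{th:countable:conda} by Proposition~\ref{prop:quotient}; hence $G$ is DTC(1) by Theorem~\ref{th:countable}.
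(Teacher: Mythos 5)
Your proposal is correct and follows exactly the paper's route: the paper's own proof is the one-line "Apply Proposition~\ref{prop:quotient} and Corollary~\ref{cor:icc}," and your argument simply spells out the same chain, including the (correct) check that the ICC quotient is countably infinite so that Corollary~\ref{cor:icc} and Theorem~\ref{th:countable} apply.
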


\begin{proof}
Apply Proposition~\ref{prop:quotient} and Corollary~\ref{cor:icc}.
\end{proof}

\begin{corollary}
    \label{cor:vnilp}
An infinite finitely generated group is DTC(2) if and only if it is virtually abelian.
\end{corollary}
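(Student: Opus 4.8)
The plan is to prove the two implications separately, using that by Theorem~\ref{th:twopt} an infinite group is DTC(2) exactly when it is not DTC(1).

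\emph{Virtually abelian implies DTC(2).} This is essentially the remark following Theorem~\ref{th:finindex}, which I would spell out. An infinite abelian group $A$ admits no DTC ultrafilter: if $v$ were one, then $v\iin\beta A\setminus A$ by \ref{lem:prelim}\ref{lem:prelim:ix}, so $u:=v$ lies in $\beta A\setminus A$ and satisfies $u\aproda v = v\aproda v = v\aprodb v = u\aprodb v$ by \ref{lem:prelim}\ref{lem:prelim:viii}, contradicting the definition of a DTC ultrafilter. Hence $A$ is not DTC(1), so it is DTC(2). If $G$ is infinite virtually abelian, pick an abelian subgroup $H\leq G$ of finite index; $H$ is infinite, hence not DTC(1), hence by Theorem~\ref{th:finindex} $G$ is not DTC(1), i.e.\ $G$ is DTC(2).

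\emph{DTC(2) implies virtually abelian, for finitely generated $G$.} I would argue the contrapositive: assume $G$ is infinite, finitely generated, and not virtually abelian, and show $G$ is DTC(1). Let $N:=\FINC(G)$, a normal subgroup of $G$, and distinguish two cases by the index $[G:N]$. If $[G:N]<\infty$, then $N$ is finitely generated by Schreier's lemma \ref{lem:threelem}\ref{lem:schreier}, and $N$ is an FC group, because for $y\iin N$ the set $[y]_N$ is contained in the finite set $[y]_G$; then \ref{lem:threelem}\ref{lem:fingenvab} supplies a finite-index abelian subgroup $K\leq N$, and since $[G:K]=[G:N]\,[N:K]<\infty$ the group $G$ is virtually abelian, contrary to assumption. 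Therefore $[G:N]=\infty$; as $G$ is finitely generated it is countable, so $G/\FINC(G)$ is countably infinite, i.e.\ $\card{G/\FINC(G)}=\aleph_0$, and Theorem~\ref{th:FCinf} shows that $G$ satisfies condition~\ref{th:countable}\ref{th:countable:conda} and hence is DTC(1).

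The combinatorial substance of the statement has already been supplied by Theorem~\ref{th:countable} (the cascade of implications ending in DTC(1)) and Theorem~\ref{th:FCinf} (the B.~H.~Neumann covering argument), so what remains for this corollary is essentially bookkeeping: the dichotomy on the index of the FC-centre $\FINC(G)$. The one place requiring genuine care — and the only place where finite generation is used — is the finite-index case, where one must invoke both Schreier's lemma and the structure theorem that finitely generated FC groups are virtually abelian in order to reach the contradiction; without the finite-generation hypothesis this step, and the conclusion, fail.
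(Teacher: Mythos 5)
Your proposal is correct and follows essentially the same route as the paper: virtually abelian $\Rightarrow$ DTC(2) via Lemma~\ref{lem:prelim}\ref{lem:prelim:viii} and Theorem~\ref{th:finindex}, and the converse via Theorem~\ref{th:FCinf} together with Lemma~\ref{lem:threelem}\ref{lem:schreier} and \ref{lem:threelem}\ref{lem:fingenvab}. You merely phrase the second implication contrapositively and spell out details (the abelian case, $\FINC(G)$ being FC) that the paper leaves implicit.
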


\begin{proof}
Let $G$ be an infinite finitely generated group.
If $G$ is virtually abelian then it is DTC(2) by Theorem~\ref{th:finindex}.
If $G$ is DTC(2) then $\card{G/\FINC(G)}<\aleph_0$ by Theorem~\ref{th:FCinf}.
In that case $\FINC(G)$ is finitely generated by~\ref{lem:threelem}\ref{lem:schreier},
hence $G$ is virtually abelian by~\ref{lem:threelem}\ref{lem:fingenvab}.
\end{proof}

Example~\ref{ex:redprod} shows that the assumption that the group is finitely generated
cannot be omitted in Corollary~\ref{cor:vnilp}.

%%%%%%%%%%%%%%%%%%%%%%%%%%%%%%%%%%%%%%%%%%%%%%%%%%%%%%%%%%%%%%%%%%%%%%%%%%%%%%%%

\section{Examples}

Dales et al~\cite[12.22]{Dales2010bas} prove that the free group $\Ftwo$ is DTC(1).
This follows from Corollary~\ref{cor:icc}, since non-commutative free groups are
ICC~\cite[Ex.8.3]{Ceccherini2010cag}.

The comment after~\cite[12.22]{Dales2010bas}
asks whether there is an amenable semigroup $S$ and an ultrafilter in $\beta S$
that determines the topological centre of $\mathsf{M}(\beta S)$.
In this section we exhibit several examples of a slightly weaker property:
An amenable group $G$ and an ultrafilter in $\beta G$ that
determines the topological centre of $\beta G$;
that is, a DTC ultrafilter.
The first such example is the group $\FINSYM$ of finite permutations of $\omega$.
This group is ICC~\cite[Ex.8.3]{Ceccherini2010cag}, hence again DTC(1) by~\ref{cor:icc}.
More generally we obtain other subgroups of $\FINSYM$ that are DTC(1):

\begin{example}
    \label{ex:finsym}
\emph{Subgroups of $\FINSYM$ that act transitively on $\omega$.}

Let $G$ be a subgroup of $\FINSYM$ that acts transitively on $\omega$.
We shall prove that $G$ is ICC and therefore DTC(1).

For $x\iin\FINSYM$ denote by $\SUPP(x)$ the support of $x$.

Take any $y\iin G\setminus\{e_G\}$ and finite $F\subseteq G$ for which $y\iin F$.
There are $a,b\iin\omega$ such that $y(a)=b\neq a$.
By transitivity there is $x\iin G$ such that $x(a)\not\in\bigcup_{z\in F} \SUPP(z)$.
Then $xyx^{-1}(x(a))=x(b)\neq x(a)$, hence $x(a)\iin\SUPP(xyx^{-1})$,
hence $xyx^{-1}\not\in F$.
Thus $[y]_G$ is infinite.
\end{example}

\begin{example}
    \label{ex:metab}
\emph{A finitely generated metabelian group of exponential growth
and generalizations.}

Let $R$ be a countable infinite integral domain,
and $P$ an infinite multiplicative subgroup of $R$.
Let $G$ be the set $P\times R$ with multiplication defined by
\[
(x,r)(y,s) := (xy, r + s x )
\quad\text{for}\quad x,y \iin P, r,s \iin R .
\]
The mapping
\[
(x,r) \mapsto
\begin{pmatrix}
x& r \\
0& 1
\end{pmatrix}
\]
is an isomorphism between $G$ and a group of $2\times 2$ matrices
with the usual matrix multiplication.
A particular instance, in which $R$ is the ring of dyadic rationals
and $P$ is the multiplicative group of integer powers of 2,
is a metabelian group with two generators and
exponential growth~\cite[6.7.1]{Ceccherini2010cag}.

Write $0:=0_R$ and $1:=1_R$ and note that $e_G=(1,0)$ and
$(x,r)^{-1}=(x^{-1},-rx^{-1})$.

We shall prove that $G$ is ICC and therefore DTC(1).
Take any $(y,s)\iin G\setminus\{e_G\}$ and finite $F\subseteq G$.
Write $S:=\{t\iin R \mid (y,t) \iin F \}$.
By cancellability in $R$ we get:
\begin{itemize}
\item
If $s\neq 0$ then there exists $x\iin P$ such that
$sx\not\in S$. In that case let $r:=0$.
\item
If $s=0$ then $y\neq 1$, and there exists $r\iin R$ such that $r(1-y)\not\in S$.
In that case let $x:=1$.
\end{itemize}
Thus in both cases there exists $(x,r)\iin G$ such that
$r+sx-ry \not\in S$, hence
\[
(x,r)(y,s)(x,r)^{-1} = (y,r + sx -ry) \not\in F .
\]
That proves $[(y,s)]_G$ is infinite.
\end{example}

\begin{example}
    \label{ex:heisenberg}
\emph{Discrete Heisenberg group and generalizations.}

Let $R$ be a countable infinite integral domain.
Let $G$ be $R\times R\times R$ with the multiplication
\[
(a,b,c)(p,q,r) := (a+p,b+q,c+r+aq)
\quad\text{for}\quad a,b,c,p,q,r \iin R.
\]
The mapping
\[
(a,b,c) \mapsto
\begin{pmatrix}
1& a& c \\
0& 1& b \\
0& 0& 1
\end{pmatrix}
\]
is an isomorphism between $G$ and a group of $3\times 3$ matrices
with the usual matrix multiplication.
For the special case $R=\Zint$, the ring of integers,
this is the \emph{discrete Heisenberg group}.
In that case $G$ is finitely generated and nilpotent,
hence has no ICC quotients by the Duguid--McLain theorem~\cite{Frisch2018nvn}.
Nevertheless Theorem~\ref{th:FCinf} applies to $G$, as will now be shown,
so that $G$ is DTC(1).

Write $0:=0_R$ and $1:=1_R$ and note that $e_G=(0,0,0)$
and $(a,b,c)^{-1} = (-a,-b,ab-c)$.

Put $V:=\{(0,0,c) \mid c\iin R \}$.
We shall prove that $\FINC(G)=V$.

Clearly $[(0,0,c)]_G=\{(0,0,c)\}$ for every $c\iin R$,
hence $V\subseteq \FINC(G)$.
Take any $(p,q,r)\not\in V$ and finite $F\subseteq G$.
Write $S:=\{t\iin R \mid (p,q,t) \iin F \}$.
By cancellability in $R$ we get:
\begin{itemize}
\item
If $p\neq 0$ then there exists $b\iin R$ such that
$r-bp\not\in S$. In that case let $a:=0$.
\item
If $p=0$ then $q\neq 0$,
and there exists $a\iin R$ such that
$r+aq\not\in S$. In that case let $b:=0$.
\end{itemize}
Thus in both cases there exists $(a,b,0)\iin G$ such that $r+aq-bp\not\in S$, hence
\[
(a,b,0)(p,q,r)(a,b,0)^{-1}
= (p,q,r+aq-bp) \not\in F .
\]
That proves $[(p,q,r)]_G$ is infinite.
Thus $\FINC(G)=V$,
and so $\card{G/\FINC(G)}=\aleph_0$.
\end{example}

\begin{example}
    \label{ex:redprod}
\emph{Reduced power of a finite group.}

Let $H$ be a finite group, and $I$ an infinite index set.
Let $H^I$ be the product group, and
$G \subseteq H^I$ the \emph{reduced product},
i.e. the subgroup of those $h=(h_i)_{i\in I}\iin H^I$ for which $h_i \neq e_H$
for only finitely many coordinates $i$.
Then $G$ satisfies the assumption of Theorem~\ref{th:suff} with $n=\card{H}$,
hence it is DTC(2).
If $H$ is not abelian then $G$ is not virtually BFC.
\end{example}

%%%%%%%%%%%%%%%%%%%%%%%%%%%%%%%%%%%%%%%%%%%%%%%%%%%%%%%%%%%%%%%%%%%%%%%%%%%%%%%%

\section{Open problems}

In view of Corollary~\ref{cor:virtBFC} it is natural to ask
\begin{question}
Is it true that every infinite (or at least every countable infinite) FC group
is DTC(2)?
\end{question}
A positive answer would yield an improvement of Corollary~\ref{cor:vnilp}:
It would then follow that a countable infinite group is DTC(2)
if and only if it is virtually FC.
However, as mentioned in the introduction,
we do not even know if the countable DTC(1) groups form a projective set.
If the answer to the following question is positive
then DTC(1) is analytic.
\begin{question}
Does Condition~\ref{th:countable:condd} of Theorem~\ref{th:countable}
imply Condition~\ref{th:countable:conda}?
\end{question}

The results in section~\ref{sec:countable} are specific to countable groups.
That raises
\begin{question}
Which results in section~\ref{sec:countable} generalize to uncountable groups?
\end{question}

%%%%%%%%%%%%%%%%%%%%%%%%%%%%%%%%%%%%%%%%%%%%%%%%%%%%%%%%%%%%%%%%%%%%%%%%%%%%%%%%

\noindent
Jan Pachl \\
Toronto, Ontario \\
Canada   \\

\noindent
Juris Stepr\={a}ns   \\
Department of Mathematics and Statistics    \\
York University   \\
Toronto, Ontario \\
Canada

\end{document}